\documentclass{article}
\usepackage{amsthm}
\usepackage{amsmath}
\usepackage{amssymb}
\usepackage{booktabs}
\usepackage{setspace} 
\usepackage{enumerate}

\newtheorem{prop}{Proposition}
\newtheorem{rem}{Remark}
\newtheorem{thm}{Theorem}
\newtheorem{defn}{Definition}
\newtheorem{lem}{Lemma}
\newtheorem{cor}{Corollary}

\DeclareMathOperator*{\argmin}{arg\:min}
\DeclareMathOperator{\TV}{TV}
\DeclareMathOperator{\BV}{BV}
\DeclareMathOperator{\dive}{div}
\DeclareMathOperator{\subdif}{\partial}
\DeclareMathOperator{\domain}{\text{dom}}

\DeclareMathOperator{\supp}{supp}

\newcommand{\Du}{ \mathrm{D} u }
\newcommand{\vDu}{\vert \mathrm{D} u \vert}
\newcommand{\wrt}{\:\mathrm{d}}
\newcommand{\Wrt}{\mathrm{D}}
\newcommand{\limn}{\underset{n\rightarrow \infty }{\lim}}
\newcommand{\wdiv}{W^q(\dive;\Omega)}

\newcommand{\ones}{\mathbf{1}}

\newcommand{\R}{\mathbb{R}}
\newcommand{\Rtwo}{\mathbb{R}^2}

\title{A pointwise characterization of the subdifferential of the total variation functional\footnote{Support by the special research grant SFB ``Mathematical Optimization and Applications in Biomedical Sciences'' of the Austrian Science Fund (FWF) is gratefully acknowledged.}}

\author{K. Bredies\footnote{Department of Mathematics, University of Graz, Heinrichstr. 36, A-8010 Graz, Austria. Mail: kristian.bredies@uni-graz.at, Phone: +43 316 380 5158.} \and M. Holler\footnote{Corresponding author. Department of Mathematics, University of Graz, Heinrichstr. 36, A-8010 Graz, Austria. Mail: martin.holler@uni-graz.at, Phone: +43 316 380 5156.}}
\date{}

\begin{document}
\maketitle

\begin{abstract}
We derive a new pointwise characterization of the subdifferential of the total variation ($ \TV $) functional. It involves a full trace operator which maps certain $ L^q $ - vectorfields to integrable functions with respect to the total variation measure of the derivative of a bounded variation function. This full trace operator extents a notion of normal trace, frequently used, for example, to characterize the total variation flow.
\end{abstract}

{\bf Keywords.} Total variation, subdifferential characterization, normal trace.

{\bf AMS subject classifications.} 49K20, 46G05, 35A15.
 \newpage

\section{Introduction}
The aim of this paper is to derive a new, pointwise characterization of the subdifferential of the $ \TV $ functional in Lebesgue spaces.
This characterization bases on a trace operator, which extends the normal trace of \cite{Anzellotti83}: There, Anzellotti introduces a normal trace $ \theta (g,\mathrm{D}u)\in L^1(\Omega;\vDu)$ for vector fields $ g\in W^q (\dive ;\Omega ) \cap L^\infty (\Omega,\mathbb{R}^d) $ (see Section \ref{sec:tools}) that allows the following characterization:
$ u^*\in \partial \TV (u) $ if and only if, there exists $ g \in W^q _0 (\dive ;\Omega ) $ with $ \Vert g\Vert _\infty \leq 1 $ such that $ u^* = -\dive g $ and  \[ \theta (g,\mathrm{D}u)=1 \quad  \mathrm{in}\,  L^1 (\Omega ;\vDu) .\]
This approach is commonly used to characterize the total variation flow, as for example in \cite{Andreu01dirichlet,Andreu01,Andreu02,Andreu09,Bellettini02,Bellettini05,Burger07}.

Introducing a ``full'' trace operator $ T:D\subset W^q  (\dive;\Omega  ) \cap L^\infty (\Omega, \mathbb{R}^d) \rightarrow L^1 (\Omega,\mathbb{R}^d ; \vDu ) $, we sharpen this result by showing that the set $\partial \TV (u) $ can be described as:
$ u^*\in \partial \TV (u) $ if and only if, there exists $ g \in D\cap W^q _0 (\dive;\Omega  ) $ with $ \Vert g\Vert _\infty \leq 1 $ such that $ u^* = -\dive g $ and
\[Tg = \sigma_u  \quad \mathrm{in} \, L^1 (\Omega , \mathbb{R}^d ;\vDu ),\]
where $ \sigma _u \in L^1 (\Omega , \mathbb{R}^d ; \vDu )  $ is the density function such that $ \mathrm{D} u = \sigma _u \vDu $.

The outline of the paper is as follows: In the second section we give some preliminary results about functions of bounded variation, introduce a straightforward generalization of the space $ H(\dive) $ and state an approximation result. The third section is the main section, where we first repeat the term of normal trace introduced in \cite{Anzellotti83}, then introduce the notion of full trace, and, using this notion, show a characterization of the subdifferential of the total variation ($ \TV $) functional. In the fourth section we address some topics where the full trace characterization of the $ \TV $ subdifferential can be applied:  We use it to reformulate well known results, such as a characterization of the total variation flow, a characterization of Cheeger sets and optimality conditions for mathematical imaging problems, in terms of the full trace operator.  In the last section we give a conclusion.
\section{Preliminaries}
\label{sec:tools}

This section is devoted to introduce notation and basic results. After some preliminary definitions, we start with a short introduction to functions of bounded variation. For further information and proofs we refer to to \cite{Ambrosio, Ziemer, Evans}. For convenience, we always assume  $ \Omega \subset \mathbb{R}^d  $ to be a bounded Lipschitz domain. Further, throughout this work, we often denote $ \intop _\Omega \phi $ or $ \intop _\Omega \phi \wrt x $ instead of $ \intop _\Omega \phi(x) \wrt x $ for the Lebesgue integral of a measureable function $ \phi $, when the usage of the Lebesgue measure and the integration variable are clear from the context.

We use a standard notation for continuously differentiable-, compactly supported- or integrable functions. However, in order to avoid ambiguity, we define the space of continuously differentiable functions on a closed set:
\begin{defn}[Continuous functions on a closed set]
Given a domain $ A\varsubsetneqq \R ^d $ and $ m\in \mathbb{N} $, we define
\[C(\overline{A},\R ^m) = \{ \phi :\overline{A}\rightarrow \R^m \,|\, \phi \mbox{ is uniformly continuous on A} \}, \]
\[C^k(\overline{A},\R ^m) = \{ \phi :\overline{A}\rightarrow \R^m \,|\, \Wrt ^\alpha \phi \in C(\overline{A},\R ^m) \mbox{ for all } |\alpha | \leq k \} \]
and
\[ C^\infty (\overline{A},\R ^m) = \bigcap _{k\in \mathbb{N}} C^k(\overline{A},\R ^m) .\]
\end{defn}
Note that for bounded domains, $\phi \in C(\overline{A}, \R^m)$ is equivalent
to $\phi$ being the restriction of a function in $C_c(\R^d, \R^m)$. This
also applies to $C^k(\overline{A},\R^m)$ and $C^\infty(\overline{A},\R^m)$
with $C_c^k(\R^d, \R^m)$ and $C_c^\infty(\R^d,\R^m)$, respectively, by
virtue of Whitney's Extension Theorem \cite[Theorem 1]{Whitney34}. 
For unbounded domains, however, this is generally not true.
\begin{defn}[Finite Radon measure]
Let  $ \mathcal{B}(\Omega) $ be the Borel $ \sigma $-algebra generated by the open subsets of $ \Omega $. We say that a function $ \mu : \mathcal{B}(\Omega) \rightarrow \mathbb{R}^m $, for $ m\in \mathbb{N} $, is a finite $ \mathbb{R}^m $-valued Radon measure if $ \mu (\emptyset) = 0 $ and $ \mu $ is $ \sigma $-additive. We denote by $ \mathcal{M}(\Omega) $ the space of all finite Radon measures on $ \Omega $. Further we denote by $ |\mu | $ the variation of $ \mu \in \mathcal{M}(\Omega) $, defined by \[
\vert\mu\vert(E)=\sup\left\{ \sum_{i=0}^{\infty}\vert\mu(E_{i})\vert\,\Bigl|\,E_{i}\in \mathcal{B}(\Omega),\, i\geq 0 ,\,\mbox{ pairwise disjoint,}\, E=\bigcup_{i=0}^{\infty}E_{i}\right\}, \] for $ E\in \mathcal{B}(\Omega) $. Note that $ |\mu (E_i) | $ denotes the Euclidean norm of $ \mu (E_i) \in \R ^m $.
\end{defn}

\begin{defn}[Functions of bounded variation]
We say that a function $u\in L^1(\Omega)$ is of bounded variation, if there exists a finite $\mathbb{R}^{d}$-valued
Radon measure, denoted by $Du=(D_{1}u,...,D_{d}u)$, such that for
all $i\in\{1,...,d\}$, $D_{i}u$ represents the distributional derivative
of $u$ with respect to the $i$th coordinate, i.e., we have\[
\intop_{\Omega}u\partial_{i}\phi=-\intop_{\Omega}\phi\wrt D_{i}u\quad\mbox{for all }\phi\in C_{c}^{\infty}(\Omega).\]
By $\BV(\Omega)$ we denote the space of all functions $ u\in L^1(\Omega) $ of bounded
variation.\end{defn}
\begin{defn}[Total variation]
For $u\in L^{1}(\Omega)$, we define the functional
$\TV:L^{1}(\Omega)\rightarrow\overline{\mathbb{R}}$ as \[
\TV(u)=\sup\left\{ \intop_{\Omega}u\dive\phi\,\Bigg\vert\,\phi\in C_{c}^{\infty}(\Omega,\mathbb{R}^{d}),\,\Vert\phi\Vert_{\infty}\leq1\right\} \]
where we set $\TV(u)=\infty$ if the set is unbounded from above.
We call $\TV(u)$ the total variation of $u$. 
\end{defn}
\begin{prop}
The functional $ \TV: L^1(\Omega) \rightarrow \overline{\mathbb{R}} $ is convex and lower semi-continuous with respect to $ L^1 $-convergence. For $u\in L^{1}(\Omega)$ we
have that \[u\in\BV(\Omega) \mbox{ if and only if } \TV(u)<\infty.\] In addition,
the total variation of $u$ coincides with the variation of the measure
$Du$, i.e., $\TV(u)=\vert Du\vert(\Omega)$.
Further, \[\Vert u\Vert_{\BV}:=\Vert u\Vert_{L^{1}}+\TV(u)\] defines
a norm on $\BV(\Omega)$ and endowed with this norm, $\BV(\Omega)$
is a Banach space. \end{prop}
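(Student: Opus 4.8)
The plan is to treat the five assertions in turn, exploiting that $\TV$ is by definition a supremum of $L^1$-continuous linear functionals. For convexity and lower semicontinuity I would fix any admissible $\phi \in C_c^\infty(\Omega, \R^d)$ with $\Vert\phi\Vert_\infty \le 1$ and note that $u \mapsto \intop_\Omega u \dive \phi$ is linear and, since $\dive \phi$ is bounded with compact support, continuous with respect to $L^1$-convergence. A pointwise supremum of a family of such affine continuous maps is automatically convex and lower semicontinuous, so both properties follow without further work.

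The core is the identity $\TV(u) = \vert Du\vert(\Omega)$ together with the equivalence that $u \in \BV(\Omega)$ if and only if $\TV(u) < \infty$. For the first direction, if $u \in \BV(\Omega)$ then for every $\phi \in C_c^\infty(\Omega, \R^d)$ the definition of the distributional derivative gives, coordinatewise, $\intop_\Omega u \dive \phi = -\intop_\Omega \phi \cdot \wrt Du$. Taking the supremum over $\Vert\phi\Vert_\infty \le 1$, and using that this admissible set is symmetric under $\phi \mapsto -\phi$, the duality formula $\vert Du\vert(\Omega) = \sup\{ \intop_\Omega \phi \cdot \wrt Du \mid \phi \in C_c^\infty(\Omega, \R^d),\ \Vert\phi\Vert_\infty \le 1 \}$ for the variation of a finite Radon measure yields $\TV(u) = \vert Du\vert(\Omega) < \infty$. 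For the converse, suppose $\TV(u) < \infty$. Then $L(\phi) := \intop_\Omega u \dive \phi$ defines a linear functional on $C_c^\infty(\Omega, \R^d)$ with $\vert L(\phi)\vert \le \TV(u)\,\Vert\phi\Vert_\infty$; after extending $L$ by density to $C_0(\Omega, \R^d)$, the Riesz representation theorem produces a finite $\R^d$-valued Radon measure $\mu$ with $L(\phi) = -\intop_\Omega \phi \cdot \wrt\mu$, and comparing with the definition of the distributional derivative identifies $\mu = Du$, so that $u \in \BV(\Omega)$. I expect this Riesz-representation step — extending the functional and recognizing the representing measure as the weak gradient — to be the main obstacle, as it is the only place where soft supremum arguments do not suffice.

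For the norm properties I would check positive homogeneity and the triangle inequality, both of which transfer from $\Vert\cdot\Vert_{L^1}$ and $\TV$ (the latter being subadditive and positively homogeneous as a supremum of linear functionals); definiteness is immediate, since $\Vert u\Vert_{\BV} = 0$ forces $\Vert u\Vert_{L^1} = 0$ and hence $u = 0$ almost everywhere.

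Finally, for completeness I would take a Cauchy sequence $(u_n)$ in $\BV(\Omega)$. It is Cauchy in $L^1(\Omega)$, hence converges to some $u \in L^1(\Omega)$, and $(\TV(u_n))$ is bounded. Lower semicontinuity gives $\TV(u) \le \liminf_n \TV(u_n) < \infty$, so $u \in \BV(\Omega)$. To upgrade $L^1$-convergence to convergence in $\Vert\cdot\Vert_{\BV}$, I would fix $\varepsilon > 0$ and $N$ with $\TV(u_n - u_m) < \varepsilon$ for $m, n \ge N$; letting $m \to \infty$ and using that $u_m - u_n \to u - u_n$ in $L^1$, lower semicontinuity yields $\TV(u - u_n) \le \varepsilon$ for all $n \ge N$, which together with $\Vert u_n - u\Vert_{L^1} \to 0$ proves $\Vert u_n - u\Vert_{\BV} \to 0$.
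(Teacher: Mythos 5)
Your argument is correct and is exactly the standard proof of these facts; the paper itself offers no proof here, stating the proposition as a known preliminary and referring to the literature (Ambrosio--Fusco--Pallara, Ziemer, Evans--Gariepy), where the same steps appear: lower semicontinuity and convexity from the supremum-of-linear-functionals structure, the identity $\TV(u)=\vert Du\vert(\Omega)$ via the duality formula for the variation of a vector measure, the converse via Riesz representation, and completeness via lower semicontinuity applied to the Cauchy tails. Nothing further is needed.
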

\begin{defn}[Strict Convergence]
For $ (u_n)_{n\in \mathbb{N}} $ with $ u_n \in \BV (\Omega) $, $ n\in \mathbb{N} $, and $ u\in \BV (\Omega) $ we say that $ (u_n)_{n\in \mathbb{N}} $ strictly converges to $ u $ if \[ \| u_n - u\| _{L^1} \rightarrow 0 \mbox{ and } \TV (u_n ) \rightarrow \TV (u) \] as $ n\rightarrow \infty $.
\end{defn}

\begin{defn}[Lebesgue Point]
Let $ f\in L^p ( \Omega)$, $ 1\leq p < \infty  $. We say that $ x\in \Omega $ is a Lebesgue point of $ f $ if 
\[\underset{r\rightarrow 0}{\lim} \frac{1}{|B(x,r)|} \intop _{B(x,r)} | f(y) - f(x) | \wrt y \rightarrow 0 \]
as $ n\rightarrow \infty  $. Note that here, $ |B(x,r)|$ denotes the Lebesgue measure of the ball with radius $ r $ around $ x\in \Omega $.

\end{defn}
\begin{rem} Remember that for any $f\in L^p (\Omega) $, $ 1\leq p < \infty  $, almost every $ x \in \Omega $ is a Lebesgue point of $ f $ (see \cite[Corollary 1.7.1]{Evans}).

\end{rem}
Next we recall some standard notations and facts from convex analysis. For proofs and further introduction we refer to \cite{Ekeland}.

\begin{defn}[Convex conjugate and subdifferential]
\label{defn:polar}
For a normed vector space $ V$ and a function $ F:V\rightarrow \overline{\mathbb{R}}$ we define its convex conjugate, or Legendre-Fenchel transform, denoted by $F^* : V^* \rightarrow \overline{\mathbb{R}}$, as \[F^*(u^*) = \underset{v\in V}{\sup} \langle v,u^* \rangle_{V,V^*} - F(v) .\] Further $ F $ is said to be subdifferentiable at $ u\in V $ if $ F(u) $ is finite and there exists $ u^* \in V^* $ such that \[\langle v-u,u^*\rangle_{V,V^*} + F(u) \leq F(v) \] for all $ v\in V $. The element $u^* \in V^* $ is then called a subgradient of $ F $ at $ u $ and the set of all subgradients at $ u $ is denoted by $ \subdif F(u) $.
\end{defn}
\begin{defn}[Convex indicator functional]
 For a normed vector space $ V $ and $ U\subset V $ a convex set, we denote by $ \mathcal{I}_U : V \rightarrow \overline{\mathbb{R}} $ the convex indicator functional of $ U $, defined by \[ \mathcal{I}_U (u) = \begin{cases} 0 & \mbox{ if } u\in U, \\ \infty &\mbox{ else.} \end{cases} \]
\end{defn}

Next we define the space $ W^q(\dive;\Omega) $, which is fundamental for the characterization of the $ \TV $ subdifferential.
\begin{defn}[The space $ W^q(\dive;\Omega) $]
\label{def:Wq(div)}Let $ 1\leq q <\infty  $ and $g\in L^{q}(\Omega,\mathbb{R}^{d})$. We say that
$\dive g\in L^{q}(\Omega)$ if there exists $w\in L^{q}(\Omega)$
such that for all $v\in C_{c}^{\infty}(\Omega)$\[
\intop_{\Omega}\nabla v \cdot g=-\intop_{\Omega}vw.\] 
Furthermore we define \begin{equation*}
W^{q}(\dive;\Omega)=\left\{ g\in L^{q}(\Omega,\mathbb{R}^{d})\,\vert\,\dive g\in L^{q}(\Omega)\right\}
\end{equation*}
with the norm $\Vert g\Vert_{W^{q}(\dive)}^{q}:=\Vert g\Vert_{L^{q}}^{q}+\Vert\dive g\Vert_{L^{q}}^{q}.$\end{defn}
\begin{rem}
Density of $C_{c}^{\infty}(\Omega)$ in $L^{p}(\Omega)$ implies that, if there exists $w\in L^{q}(\Omega)$ as above,
it is unique. Hence it makes sense to write $\dive g=w$. By completeness of $L^{q}(\Omega)$ and $L^{q}(\Omega,\mathbb{R}^{d})$ it follows that $W^{q}(\dive;\Omega)$ is a Banach space when equipped with $\Vert\cdot\Vert_{W^{q}(\dive)}$.
\end{rem}
\begin{rem} Note that $ W^q (\dive ; \Omega) $ is just a straightforward generalization of the well known space $ H(\dive ; \Omega ) $. Also classical results like density of $ C^\infty (\overline{\Omega},\mathbb{R}^d) $ and existence of a normal trace on $ \partial \Omega $ can be derived for $ W^q (\dive ;\Omega ) $ as straightforward generalizations of the proofs given for example in \cite[Chapter 1]{Girault}.
\end{rem}

\begin{defn}
\label{def:W0(div)}For $ 1\leq q < \infty $, we define \[
W_{0}^{q}(\dive;\Omega)=\overline{C_{c}^{\infty}(\Omega,\mathbb{R}^d)}^{\Vert\cdot\Vert_{W^{q}(\dive)}}.\]
\end{defn}
\begin{rem} \label{rem:gauss_green_w0_div}
By density it follows that, for $ g\in W_0 ^q \dive;\Omega) $, we have
\[\intop _\Omega \nabla v g = - \intop _\Omega v \dive g\] for all $ v\in C^\infty(\overline{\Omega}) $.
\end{rem}

The following approximation result will be needed in the context of the full trace.
\begin{prop} \label{prop:w_div_approximation}
If $ \Omega $ is a bounded Lipschitz domain, $ 1\leq q <\infty  $ and $ g\in W^q (\dive ;\Omega) $, there exists a sequence of vector fields $ (g_n)_{n\geq 0 }\subset C^\infty (\overline{\Omega},\mathbb{R}^d ) $ such that

\begin{enumerate}
\item $\Vert g_n - g \Vert _{W^q(\dive)} \rightarrow 0 \mbox{ as } n \rightarrow \infty $,
\item $\Vert g_n \Vert _\infty \leq \Vert g \Vert _\infty$ for each $ n\in \mathbb{N} $, if $ \| g \| _\infty < \infty  $,
\item $g_n (x) \rightarrow g(x) $ for every Lebesgue point $x\in \Omega $ of g.
\item $ \Vert g_n - g \Vert _{\infty,\overline{\Omega}} \rightarrow 0 $ as $ n\rightarrow \infty $, if, additionally, $ g\in C(\overline{\Omega},\mathbb{R}^d) $.
\end{enumerate}
\end{prop}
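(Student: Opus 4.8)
The plan is to localize near the Lipschitz boundary, approximate by inward translation followed by mollification, and reassemble with a partition of unity used as convex weights. Concretely, I would cover $\overline\Omega$ by finitely many open sets $U_0,\dots,U_N$ with $U_0\subset\subset\Omega$ and each $U_j$, $j\ge 1$, a neighbourhood of a boundary piece on which $\partial\Omega$ is, after rotation, the graph of a Lipschitz function with constant $L_j$. Choose a smooth partition of unity $\{\psi_j\}_{j=0}^N$ subordinate to $\{U_j\}$ with $\psi_j\ge 0$ and $\sum_j\psi_j\equiv 1$ on a neighbourhood of $\overline\Omega$. For each $j$ fix a direction $e_j$ transversal to the local boundary piece and pointing into $\Omega$ (with $e_0=0$), and set
\[ G_j := \rho_\epsilon * \big(g(\cdot - t_j e_j)\big), \qquad g_n := \sum_{j=0}^N \psi_j\,G_j, \]
where $\rho_\epsilon$ is a standard nonnegative mollifier. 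The decisive structural point is that I do \emph{not} multiply $g$ by $\psi_j$ before smoothing: each $G_j$ is a translate--mollification of the whole field $g$, so $\|G_j\|_\infty\le\|g\|_\infty$, and since $\{\psi_j\}$ is a convex partition of unity this yields $\|g_n\|_\infty\le\|g\|_\infty$ directly, giving property (2).

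The geometric content is the choice of $t_j$ and $\epsilon$ so that $G_j$ is well defined and smooth on $\supp\psi_j\cap\overline\Omega$. Writing the local boundary as a graph with Lipschitz constant $L_j$, a point of $\overline\Omega$ shifted by $t_j$ along $e_j$ has its $\epsilon$-mollification ball contained in $\Omega$ as soon as $t_j>(1+L_j)\epsilon$. I would therefore take $t_j=c_j\epsilon$ with a fixed constant $c_j=c_j(L_j)$ and let $\epsilon\to 0$; keeping the ratio $t_j/\epsilon$ fixed is exactly what will reconcile the boundary-clearing requirement with the pointwise statement below.

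Properties (1) and (4) then follow from the continuity of translation and mollification. For the $L^q$ part, $g_n-g=\sum_j\psi_j(G_j-g)\to 0$ in $L^q$ because each $G_j\to g$ in $L^q$. For the divergence I use that translate--mollification commutes with the weak divergence, so $\dive G_j=\rho_\epsilon*\big((\dive g)(\cdot-t_je_j)\big)\to\dive g$ in $L^q$; expanding
\[ \dive g_n=\sum_j \nabla\psi_j\cdot G_j+\sum_j\psi_j\,\dive G_j, \]
the second sum converges to $\dive g$, while the first converges to $\sum_j\nabla\psi_j\cdot g=0$ on $\Omega$, since $\sum_j\nabla\psi_j\equiv 0$ there. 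If in addition $g\in C(\overline\Omega,\R^d)$, uniform continuity makes each $\|G_j-g\|_{\infty,\overline\Omega}\to 0$, and the convex-weight estimate gives property (4).

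The main obstacle is property (3). At a Lebesgue point $x$ of $g$ I must show $G_j(x)\to g(x)$, even though $G_j$ is a mollification centred at the interior point $x-t_je_j$. Rewriting $G_j(x)=\int \rho_\epsilon(z-t_je_j)\,g(x-z)\wrt z$ exhibits it as a convolution with the shifted kernel $\rho_\epsilon(\cdot-t_je_j)$, which is nonnegative, integrates to one, is supported in $B(0,t_j+\epsilon)$, and has sup-norm of order $\epsilon^{-d}$. The Lebesgue-point estimate then reads
\[ |G_j(x)-g(x)|\le \frac{C}{\epsilon^d}\intop_{B(x,t_j+\epsilon)}|g(w)-g(x)|\wrt w, \]
and because $x$ is a Lebesgue point the integral is $o\big((t_j+\epsilon)^d\big)$; with the fixed ratio $t_j=c_j\epsilon$ this is $o(\epsilon^d)$ and the bound tends to $0$, whence $g_n(x)=\sum_j\psi_j(x)G_j(x)\to g(x)$. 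The delicate point, and the reason the fixed-ratio choice is forced, is precisely this competition: the translation must exceed $(1+L_j)\epsilon$ to clear the boundary, yet $t_j/\epsilon$ must stay bounded for the shifted kernel to remain an approximate identity at Lebesgue points; both hold simultaneously only for $t_j$ comparable to $\epsilon$.
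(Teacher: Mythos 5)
Your proposal is correct and follows the same basic strategy as the paper's appendix: cover $\partial\Omega$ by finitely many Lipschitz-graph neighbourhoods, translate inward by a distance proportional to the mollification radius (with the ratio fixed by the Lipschitz constant so that the mollification ball clears the boundary), mollify, verify the Lebesgue-point convergence via the enlarged-ball averaging estimate, and reassemble with a partition of unity. The one genuine structural difference is where the cutoff enters: the paper smooths the pieces $g\zeta_i$ and sums, $g_\epsilon=\sum_i \eta_\epsilon * (g\zeta_i)_{S_\epsilon}$, whereas you smooth the whole field in each chart and only then form the convex combination $g_n=\sum_j\psi_j G_j$. Your ordering makes property (2) immediate, since each $G_j$ is an average of values of $g$ over a ball in $\Omega$ and the $\psi_j$ are convex weights; in the paper's ordering the bound $\Vert g_\epsilon\Vert_\infty\le\Vert g\Vert_\infty$ holds for each summand (via a Cauchy--Schwarz estimate) but the passage to the sum is less transparent, since the differently shifted kernels weighted by $\zeta_i$ need not integrate to at most one. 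The price you pay is the commutator term $\sum_j\nabla\psi_j\cdot G_j$ in $\dive g_n$, which you correctly dispose of using $\sum_j\nabla\psi_j\equiv 0$ on a neighbourhood of $\overline\Omega$ together with $G_j\to g$ in $L^q$; the paper avoids this term entirely because the weak divergence of each piece is again a pure translate--mollification of $\dive(g\zeta_i)$. Both routes are sound; yours is arguably the cleaner way to secure the sup-norm bound, which is the property the rest of the paper leans on most heavily.
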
 A proof can be found in the Appendix.

\section{Subdifferential of TV} \label{tv_subdif}

In order to describe the subdifferential of the $ \TV $ functional, for $ u\in \BV (\Omega) $, we need a notion of trace for $ W^q (\dive ; \Omega )$ vector fields in $ L^1 (\Omega,\mathbb{R}^d ; \vDu ) $.

\subsection{The normal trace}
We first revisit the normal trace introduced in \cite{Anzellotti83}. We do so by defining it for $ W^q(\dive ; \Omega) $ vector fields as a closed operator. In this subsection, if not restricted further, let always be $ 1\leq q < \infty  $, $ p=\frac{q}{q-1} $ if $ q\neq 1 $ or $ p=\infty $ else, and $ \Omega $ a bounded Lipschitz domain.

\begin{prop}\label{prop:Dual Operator}
Set $\tilde{D}_N:=W^q  (\dive ;\Omega)\cap L^\infty (\Omega,\mathbb{R}^d)$. Then, with $u\in \BV (\Omega) \cap L^p (\Omega )$ fixed, for any $z\in \tilde{D} _N$ there exists a function $ \theta (z,\Du) \in L^1 (\Omega; \vDu) $ such that 
\[\intop _\Omega  \theta(z,\Du) \psi \wrt \vDu = - \intop _\Omega u \dive (z\psi) \wrt x \]
for all $ \psi \in C^\infty _c (\Omega ) $.
\begin{proof}
For $ z\in \tilde{D}_N $ we define
\begin{eqnarray*}
L_z:C^{\infty}_c (\Omega) & \rightarrow & \mathbb{R}\\
\psi & \mapsto & -\intop_{\Omega}u\dive(z\psi)\wrt x
\end{eqnarray*}
and show that $ L_z $ can be extended to a linear, continuous operator from $C_0 (\Omega)$ to $\mathbb{R}$.

It is clear that $L_z$ is well-defined and linear, hence by definition of $C_0 (\Omega) $ as closure of $ C_c ^\infty (\Omega) $ with respect to $ \| \cdot \| _\infty $, it suffices to show that $L_z$ is continuous with respect to $\Vert \cdot \Vert _\infty $. With $ \psi \in C^\infty _c (\Omega) $ and $ (z_n)_{n\geq 0} \subset C^\infty (\overline{\Omega},\mathbb{R}^d) $ converging to $ z $ as in Proposition \ref{prop:w_div_approximation}, we estimate

\begin{eqnarray*}
\vert L_z (\psi) \vert & = & \limn \bigg \vert -\intop _\Omega u \dive (z_n \psi)\wrt x \bigg \vert  =  \limn \bigg \vert \intop _\Omega z_n \psi  \wrt \Wrt u\bigg \vert  \\
& \leq &  \Vert z \Vert _\infty  \intop _\Omega  \vert \psi \vert  \wrt  \vert \Wrt u \vert \leq \Vert z \Vert _\infty \Vert \psi \Vert _\infty \vDu (\Omega) ,
\end{eqnarray*}
where we used that $ \Vert z_n - z \Vert _{W^q(\dive )} \rightarrow 0 $ as $ n\rightarrow \infty $ and that $ \Vert z_n \Vert _\infty \leq \Vert z \Vert _\infty $ for each $ n\in \mathbb{N} $.

Thus, for any $ z\in W^q  (\dive ;\Omega)\cap L^\infty (\Omega,\mathbb{R}^d) $, we have that $ L_z\in C_0 (\Omega) ^*=\mathcal{M}(\Omega) $ and we can write $ (z,\Wrt u) $ for the Radon measure associated with $ L_z $. Performing the above calculations for $ \psi \in C_c ^\infty (A) $ with any open $ A\subset \Omega  $ yields $ \vert L_z (\psi )\vert \leq \Vert z \Vert _\infty \Vert \psi \Vert _\infty \vDu (A) $. Thus it follows that $ (z,\Wrt u)\ll \vDu  $ and hence by the Radon-Nikodym theorem there exists $ \theta (z,\Wrt u)\in L^1 (\Omega ; \vert \Wrt u\vert ) $ such that $(z,\Wrt u)=\theta (z,\Wrt u)\vert \Wrt u\vert $.
\end{proof}
\end{prop}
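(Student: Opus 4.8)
The plan is to show that the linear functional
\[ L_z : \psi \mapsto -\intop_\Omega u \dive(z\psi) \wrt x \]
defined on $C^\infty_c(\Omega)$ is continuous with respect to $\Vert \cdot \Vert_\infty$. Since $C_0(\Omega)$ is by definition the $\Vert\cdot\Vert_\infty$-closure of $C^\infty_c(\Omega)$, such continuity lets $L_z$ extend uniquely to a bounded functional on $C_0(\Omega)$, and the Riesz representation theorem $C_0(\Omega)^* = \mathcal{M}(\Omega)$ then produces a finite Radon measure, call it $(z,\Du)$, representing $L_z$. The desired $\theta(z,\Du)$ would arise as a Radon--Nikodym density of this measure with respect to $\vDu$, so the final task is to certify absolute continuity $(z,\Du)\ll\vDu$.

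First I would linearize $z$ by approximation. For $z$ smooth, $z\psi$ is a compactly supported test vector field, and the very definition of $\Du$ as a distributional derivative gives the integration-by-parts identity $-\intop_\Omega u\dive(z\psi) = \intop_\Omega z\psi \wrt \Du$. For general $z\in\tilde D_N$, I would invoke Proposition \ref{prop:w_div_approximation} to choose $(z_n)\subset C^\infty(\overline\Omega,\R^d)$ with $\Vert z_n - z\Vert_{W^q(\dive)}\to 0$ and $\Vert z_n\Vert_\infty \leq \Vert z\Vert_\infty$. For each $n$ the smooth identity holds, and estimating its right-hand side crudely yields the uniform bound
\[ \bigg\vert -\intop_\Omega u \dive(z_n\psi) \wrt x \bigg\vert = \bigg\vert \intop_\Omega z_n \psi \wrt \Du \bigg\vert \leq \Vert z_n\Vert_\infty \Vert\psi\Vert_\infty \vDu(\Omega) \leq \Vert z\Vert_\infty \Vert\psi\Vert_\infty \vDu(\Omega). \]

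Next I would pass to the limit only on the left. Writing $\dive(z_n\psi) = \psi\dive z_n + \nabla\psi\cdot z_n$, the convergences $\dive z_n \to \dive z$ and $z_n \to z$ in $L^q$, paired by H\"older with $u\psi, u\nabla\psi \in L^p$ (here is exactly where the hypothesis $u\in L^p(\Omega)$ is used), give $\intop_\Omega u\dive(z_n\psi) \to \intop_\Omega u\dive(z\psi)$. The uniform bound above therefore survives in the limit and shows $\vert L_z(\psi)\vert \leq \Vert z\Vert_\infty \Vert\psi\Vert_\infty \vDu(\Omega)$, the sought continuity. I expect this asymmetric limit to be the crux of the argument: one cannot naively pass $\intop_\Omega z_n\psi\wrt\Du$ to the limit, since $L^q$-convergence and Lebesgue-point convergence of $z_n$ say nothing about the part of $\vDu$ singular to the Lebesgue measure. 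Keeping the uniform estimate on the measure side while taking the limit only on the $L^p$--$L^q$ side circumvents this entirely.

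Finally, for absolute continuity I would repeat the whole estimate with $\psi \in C^\infty_c(A)$ for an arbitrary open $A\subset\Omega$, obtaining $\vert L_z(\psi)\vert \leq \Vert z\Vert_\infty \Vert\psi\Vert_\infty \vDu(A)$. Taking the supremum over such $\psi$ with $\Vert\psi\Vert_\infty\leq 1$ bounds the variation $\vert(z,\Du)\vert(A)\leq\Vert z\Vert_\infty\vDu(A)$ on every open set, whence $\vDu(A)=0$ forces $\vert(z,\Du)\vert(A)=0$, and $(z,\Du)\ll\vDu$ follows. The Radon--Nikodym theorem then supplies $\theta(z,\Du)\in L^1(\Omega;\vDu)$ with $(z,\Du)=\theta(z,\Du)\vDu$, which is precisely the claimed identity once tested against $\psi\in C^\infty_c(\Omega)$.
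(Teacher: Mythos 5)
Your proposal is correct and follows essentially the same route as the paper: approximate $z$ by the smooth fields of Proposition \ref{prop:w_div_approximation}, use the integration-by-parts identity and the uniform bound $\Vert z_n\Vert_\infty\leq\Vert z\Vert_\infty$ to get continuity of $L_z$ on $C_0(\Omega)$, then localize to open $A\subset\Omega$ for absolute continuity and apply Radon--Nikodym. Your explicit justification of the limit $\intop_\Omega u\dive(z_n\psi)\to\intop_\Omega u\dive(z\psi)$ via the product rule and H\"older (where $u\in L^p$ enters) only spells out a step the paper leaves implicit.
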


With that we can define the normal trace operator and prove additional properties:
\begin{prop}[Normal trace operator]\label{prop:normal_trace_operator}
With $ \tilde{D}_N $ as in Proposition \ref{prop:Dual Operator} and $ u\in \BV (\Omega) \cap L^p (\Omega)  $ fixed, the operator
\begin{eqnarray*}
\widetilde{T_N}:\tilde{D} _N\subset W^q (\dive;\Omega )&\rightarrow &L^1(\Omega;\vert \Wrt u\vert )\\
z&\mapsto & \theta (z,\Wrt u)
\end{eqnarray*}
with $ \theta (z,\Wrt u) $ the density function of the measure $ (z,\Wrt u) $ with respect to $ \vDu $ as above, is well-defined and closeable. Further, with $ T_N: D_N \rightarrow L^1(\Omega;\vert \Wrt u\vert ) $ denoting the closure of  $ \widetilde{T_N} $ defined on $ D_N \subset W^q(\dive;\Omega)$, we have that, for $ z\in D_N $, 
\[ \Vert T_N z \Vert _\infty \leq \Vert z \Vert _\infty  \] 
whenever $ z\in L^\infty(\Omega , \mathbb{R}^d ) $ and, for $ \phi \in C(\overline{\Omega},\mathbb{R}^d ) \cap W^q (\dive;\Omega ) $, that 
\[ T_N\phi= \phi \cdot \sigma _u \in L^1 (\Omega ;\vert \Wrt u\vert )\]
 where $ \sigma _u $ is the density function of $ \Wrt u $ w.r.t. $ \vDu $.

\begin{proof}
Well-definition is clear since the representation of $ L_z $ as a measure and also its density function with respect to $ \vDu $ is unique. Let now $ (z_n)_{n\geq 0},(\tilde{z}_n)_{n\geq 0} \subset \tilde{D} _N $ be two sequences converging to $ z $ in $ W^q (\dive;\Omega ) $ and suppose that $ \widetilde{T_N} z_n \rightarrow h $ and $ \widetilde{T_N} \tilde{z}_n \rightarrow \tilde{h} $ with $ h,\tilde{h} \in L^1 (\Omega;\vert \Wrt u \vert ) $. With $ \psi \in C^\infty _c (\Omega) $ we can write, using $ \limn \dive (z_n \psi ) = \dive (z\psi) = \limn \dive (\tilde{z}_n \psi ) $ in $ L^q (\Omega) $,
\begin{eqnarray*}
\intop _\Omega h \psi \wrt \vDu & = & \limn \intop _\Omega (\widetilde{T_N} z_n) \psi \wrt \vDu  =   \limn -\intop_\Omega u \dive (z_n \psi) \wrt x \\ & = & \limn -\intop _\Omega u \dive (\tilde{z}_n \psi) \wrt x  =  \limn \intop _\Omega (\widetilde{T_N} \tilde{z}_n) \psi \wrt \vDu \\ & = & \intop _\Omega \tilde{h} \psi \wrt \vDu
\end{eqnarray*}
and thus, by density, $ h=\tilde{h} $ and, consequently, $ \widetilde{T_N} $ is closeable. The assertion $ \Vert T_N z \Vert _\infty  \leq \Vert z \Vert _\infty $ for $ z\in D_N $ follows from $ \left| \intop _A \theta(z,\Du) \wrt \vDu \right| \leq \Vert z \Vert _\infty \vDu (A) $, for all $ A\subset \Omega $ measurable, in the case that $ \Vert z\Vert _\infty  <\infty $, since then $ z\in \tilde{D}_N $. If $ \| z \|_\infty = \infty$, the inequality is trivially satisfied.

In order to show that $ T_N \phi =\phi \cdot \sigma _u $ for $ \phi \in C(\overline{\Omega},\mathbb{R}^d)\cap W^q (\dive;\Omega ) $ first note that $ \phi \in \tilde{D}_N$. Thus, $ T_N \phi $ is defined and we can use that, due to continuity of $ \phi $, the approximating  vector fields $ (\phi_n)_{n\geq 0} $ as in Proposition \ref{prop:w_div_approximation}  converge uniformly to $ \phi $ and write, again for $ \psi \in C_c ^\infty (\Omega) $,
\begin{eqnarray*}
\intop _\Omega (T_N \phi) \psi \wrt \vDu & = & -\intop _\Omega u \dive(\phi \psi) \wrt x  =   \limn -\intop _\Omega u \dive (\phi _n \psi) \wrt x \\ & = & \limn \intop _\Omega \phi _n\psi \wrt \Wrt u  =  \intop _\Omega (\phi \cdot \sigma _u) \psi \wrt \vDu  .
\qedhere
\end{eqnarray*}
\end{proof}
\end{prop}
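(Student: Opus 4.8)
The plan is to establish the four assertions --- well-definition, closeability, the $L^\infty$-bound and the pointwise formula --- in turn, treating the construction of Proposition~\ref{prop:Dual Operator} as a black box. Well-definition and linearity are immediate: for each $z \in \tilde{D}_N$ that proposition produces the functional $L_z$, its Riesz representative $(z,\Wrt u) \in \mathcal{M}(\Omega)$ and a Radon--Nikodym density $\theta(z,\Wrt u) \in L^1(\Omega;\vDu)$; since both the representing measure and its density with respect to $\vDu$ are uniquely determined, $z \mapsto \theta(z,\Wrt u)$ is well-defined, and linearity follows from the linear dependence of $L_z$ on $z$ together with uniqueness of densities.

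For closeability I would argue directly. Given two sequences $(z_n),(\tilde{z}_n) \subset \tilde{D}_N$ converging to a common limit $z$ in $W^q(\dive;\Omega)$ with $\widetilde{T_N} z_n \to h$ and $\widetilde{T_N}\tilde{z}_n \to \tilde{h}$ in $L^1(\Omega;\vDu)$, I must show $h = \tilde{h}$. Pairing with an arbitrary $\psi \in C_c^\infty(\Omega)$ and inserting the defining identity turns both limits into $\limn -\intop_\Omega u\,\dive(z_n\psi)$ and $\limn -\intop_\Omega u\,\dive(\tilde{z}_n\psi)$. The decisive step is the convergence $\dive(z_n\psi) = \psi\,\dive z_n + z_n \cdot \nabla\psi \to \dive(z\psi)$ in $L^q(\Omega)$, which rests on the weak product rule in $W^q(\dive;\Omega)$, on $z_n \to z$ there and on boundedness of $\psi$ and $\nabla\psi$; as $u \in L^p(\Omega)$ with $p$ conjugate to $q$, H\"older's inequality then produces a common limit $-\intop_\Omega u\,\dive(z\psi)$ that is independent of the approximating sequence. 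Thus $h$ and $\tilde{h}$ pair identically with every $\psi \in C_c^\infty(\Omega)$, and since $\vDu$-integrable densities are determined by their action against $C_c^\infty(\Omega)$ (dense in $C_0(\Omega)$), we obtain $h = \tilde{h}$. I expect this limit interchange to be the main obstacle, since it hinges on the exact matching of the exponents $p$ and $q$ and on the product rule for the weak divergence.

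For the bound, note that if $z \in D_N$ satisfies $\|z\|_\infty < \infty$ then $z \in \tilde{D}_N$ and $T_N z = \theta(z,\Wrt u)$. From the estimate $|L_z(\psi)| \leq \|z\|_\infty \|\psi\|_\infty \vDu(A)$ for $\psi \in C_c^\infty(A)$, already recorded in Proposition~\ref{prop:Dual Operator}, I would deduce $|(z,\Wrt u)|(A) \leq \|z\|_\infty \vDu(A)$ first for open $A$ and then, by outer regularity, for every measurable $A$. Testing this against the superlevel sets of $\theta(z,\Wrt u)$ yields $|\theta(z,\Wrt u)| \leq \|z\|_\infty$ $\vDu$-almost everywhere, i.e. $\|T_N z\|_\infty \leq \|z\|_\infty$; the case $\|z\|_\infty = \infty$ is trivial.

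Finally, for $\phi \in C(\overline{\Omega},\R^d) \cap W^q(\dive;\Omega)$, continuity on $\overline{\Omega}$ makes $\phi$ bounded, so $\phi \in \tilde{D}_N$ and $T_N\phi$ is defined. I would approximate $\phi$ by the sequence $(\phi_n) \subset C^\infty(\overline{\Omega},\R^d)$ of Proposition~\ref{prop:w_div_approximation}, crucially invoking its property~(4), so that $\phi_n \to \phi$ uniformly on $\overline{\Omega}$. For $\psi \in C_c^\infty(\Omega)$ the definition of $T_N\phi$ gives $\intop_\Omega (T_N\phi)\psi \wrt \vDu = -\intop_\Omega u\,\dive(\phi\psi)$; replacing $\phi$ by $\phi_n$ and using the classical Gauss--Green identity $-\intop_\Omega u\,\dive(\phi_n\psi) = \intop_\Omega \phi_n\psi \wrt \Wrt u$ (legitimate because $\phi_n\psi \in C_c^\infty(\Omega,\R^d)$), the uniform convergence of $\phi_n$ together with finiteness of $\vDu(\Omega)$ lets me pass to the limit on the right and identify it with $\intop_\Omega (\phi \cdot \sigma_u)\psi \wrt \vDu$. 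Density of $C_c^\infty(\Omega)$ in the relevant sense then forces $T_N\phi = \phi \cdot \sigma_u$ in $L^1(\Omega;\vDu)$.
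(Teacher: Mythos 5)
Your proposal is correct and follows essentially the same route as the paper: uniqueness of the representing measure and its density for well-definition, the limit $\dive(z_n\psi)\to\dive(z\psi)$ in $L^q$ paired against $\psi\in C_c^\infty(\Omega)$ for closeability, the local estimate $\left|\intop_A\theta(z,\Wrt u)\wrt\vDu\right|\le\Vert z\Vert_\infty\vDu(A)$ for the $L^\infty$-bound, and uniform convergence of the approximating fields for the identity $T_N\phi=\phi\cdot\sigma_u$. The extra details you supply (the product rule for the weak divergence, outer regularity, the superlevel-set argument) are exactly the steps the paper leaves implicit.
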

\begin{rem}
Note that by similar arguments one could also show that $ \widehat{T_N}: X(\Omega):=W^q (\dive;\Omega  ) \cap L^\infty (\Omega,\mathbb{R}^d) \rightarrow L^1(\Omega;\vDu) $ is continuous, when $ X $ is equipped with the norm $ \Vert z \Vert _X := \Vert z \Vert _\infty +  \Vert \dive z \Vert _{L^q}$.
\end{rem}
We therefore have a suitable notion of normal trace for a dense subset of $ W^q (\dive;\Omega ) $. The closedness of the operator $ T_N $ can be interpreted as follows: If $ z\in W^q (\dive;\Omega )\cap L^\infty (\Omega,\mathbb{R}^d) $ is sufficiently regular in the sense that the normal trace of its approximating vector fields as in Proposition \ref{prop:w_div_approximation} converges to some $ h\in L^1(\Omega;\vDu) $ with respect to $ \Vert \cdot \Vert _{L^1} $ (which is satisfied for example if $ z_n $ converges pointwise $ \vDu $-a.e.), then $ T_N z=h=\limn (z_n \cdot \sigma _u) $ with $ \sigma _u $ again the density function of $ \Wrt u $ with respect to $ \vDu $.

\subsection{The full trace}
As we can see in Proposition \ref{prop:normal_trace_operator} the normal trace only provides information about the vector field $ g $ in the direction $ \sigma_u $. In the following we introduce a notion of trace which gives full vector information $ \vDu $-a.e. As for the normal trace, we also define the full trace for a dense subset of $ W^q(\dive ;\Omega) $-vector fields, where again, throughout this subsection, we assume that $ 1\leq q<\infty $. As we will see, existence of a full trace is a stronger condition than existence of a normal trace as above. Moreover, the full trace extends the notion of normal trace in the following sense: If for $ g\in W^q  (\dive;\Omega ) \cap L^{\infty} ( \Omega ,\mathbb{R} ^d) $ there exists a full trace $ h\in L^1 (\Omega,\mathbb{R}^d ; \vDu ) $, this implies that the normal trace $ T_N g $ can be written as $ T_N g=h \cdot \sigma _u $. 
First we need to define a notion of convergence:
\begin{defn}\label{defn:notion_of_convergence}
Let $ g\in W^q  (\dive ;\Omega )  \cap L^\infty (\Omega,\mathbb{R}^d). $ For $ (g_n)_{n\geq 0} \subset C (\overline{\Omega},\mathbb{R}^d )\cap \wdiv $ we say that $ (g_n)_{n\geq 0} \overset{\sim}{\rightarrow} g $ if

\begin{enumerate}
\item $\Vert g_n -g \Vert _{W^q (\dive)} \rightarrow 0$,
\item $\Vert g_n \Vert _\infty \leq \Vert g \Vert _\infty$,
\item $g_n (x) \rightarrow g(x) \text{ for every Lebesgue point x of }g.$
\end{enumerate}

\end{defn}

Note that by Proposition \ref{prop:w_div_approximation}, for every $ g\in W^q (\dive , \Omega) $ there exists a sequence $ (g_n)_{n\geq 0} \subset C^\infty (\overline{\Omega},\mathbb{R}^d) $ converging to $ g $ in the above sense.

\begin{defn}[Full trace operator]\label{defn:Full_trace}
With $ u\in \BV (\Omega ) $, define \[ T:  D \subset W^q  (\dive ; \Omega ) \cap L^{\infty} (\Omega,\mathbb{R}^d ) \rightarrow L^1 ( \Omega,\mathbb{R}^d ; \vDu ) \] by

\begin{equation*}
v=Tg 
\end{equation*}
whenever
\begin{eqnarray} \label{tg_v}
\left\{
\begin{gathered}
\text{for all } (g_n)_{n\geq 0} \subset  C^\infty (\overline{\Omega},\mathbb{R}^d) \text{ such that } g_n \overset{\sim}{\rightarrow} g,  \\ \text{it follows that } \Vert g_n - v \Vert _{L^1(\Omega,\mathbb{R}^d ; \vDu ) }\rightarrow 0,
\end{gathered}\right.
\end{eqnarray}
where
\begin{multline*} D=  \left\{ g \in W^q  (\dive;\Omega ) \cap L^{\infty}(\Omega ,\mathbb{R}^d) \, \vert \,\right. \\ \left. \text{there exists } v\in L^1 (\Omega,\mathbb{R}^d ; \vDu ) \text{ satisfying } \eqref{tg_v}  \right\}.
\end{multline*}
\end{defn}
Clearly, such $ v=Tg $ is unique in $ L^1 (\Omega,\mathbb{R}^d ; \vDu ) $ and hence $ T $ is well-defined. The next two propositions give some basic properties of the trace operator.  It is shown that $ T $ is consistent with the normal trace operator and, as one would expect, is the identity for continuous vector fields. In the following we denote by $ \vert \Wrt ^a u \vert $ the absolute continuous part of the measure $ \vDu $ with respect to $ \mathcal{L}^d $.
\begin{prop}\label{prop:basic_prop_full_trace_1}
For $ u\in \BV (\Omega) $ and $ g\in D $ with $ D $ as in Definition \ref{defn:Full_trace}, we have that\[ Tg = g \quad \vert \Wrt ^a u \vert -a.e.,\]
\[\Vert Tg \Vert _\infty \leq \Vert g \Vert _\infty.\] 
\begin{proof}
Take $ (g_n)_{n\geq 0 } \overset{\sim}{\rightarrow} g$ as in Definition \ref{defn:notion_of_convergence}. By $ L^q $-convergence of $ (g_n)_{n\geq 0} $ to $ g $, there exists a subsequence of $ (g_n)_{n\geq 0} $, denoted by $ (g_{n_i})_{i\geq0 } $ converging pointwise $ \mathcal{L}^d $-almost everywhere -- and thus $ |\Wrt ^a u | $-a.e.~-- to $ g $. Now by convergence of $ (g_{n_i})_{i\geq 0 } $ to $ Tg $ in $ L^1(\Omega,\mathbb{R}^d;\vDu) $ there exists a subsequence, again denoted by $ (g_{n_i})_{i\geq 0} $, converging to $ Tg $ $ \vDu $-a.e. Since we can write $ \vDu = | \Wrt ^a u | + | \Wrt ^s u| $ where $ |\Wrt ^s u| $ denotes the singular part of $ \vDu $ with respect to $ \mathcal{L}^d $, this implies convergence of $ (g_{n_i})_{i\geq 0} $ to $ Tg $ $ |\Wrt ^a u | $ -a.e. Together, by uniqueness of the pointwise limit, it follows $ Tg = g $ $ |\Wrt ^a u | $-a.e.

Since \[ |Tg| = |\lim _{i\rightarrow \infty } g_{n_i}|  \leq \| g\| _\infty \quad \vDu\mbox{-a.e.},\] also the second assertion follows.
\end{proof}
\end{prop}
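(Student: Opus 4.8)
The plan is to establish both assertions at once by producing a single approximating sequence along which we can control $g$ and $Tg$ pointwise simultaneously. First I would invoke the remark following Definition~\ref{defn:notion_of_convergence} (which rests on Proposition~\ref{prop:w_div_approximation}) to select a sequence $ (g_n)_{n\geq 0} \subset C^\infty(\overline{\Omega},\mathbb{R}^d) $ with $ g_n \overset{\sim}{\rightarrow} g $. Since $ g\in D $, the defining property \eqref{tg_v} of the full trace then guarantees $ \Vert g_n - Tg \Vert_{L^1(\Omega,\mathbb{R}^d;\vDu)} \rightarrow 0 $.

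The key structural observation is that the absolutely continuous part $ \vert \Wrt^a u\vert $ lies below both measures governing the two relevant modes of convergence: on the one hand $ \vert \Wrt^a u\vert \ll \mathcal{L}^d $, and on the other hand, writing the Lebesgue decomposition $ \vDu = \vert \Wrt^a u\vert + \vert \Wrt^s u\vert $ (a sum of mutually singular measures), $ \vert \Wrt^a u\vert \leq \vDu $. I would exploit this as follows. Condition~(1) of $ \overset{\sim}{\rightarrow} $ yields $ g_n \rightarrow g $ in $ L^q(\Omega,\mathbb{R}^d) $, hence a subsequence $ (g_{n_i})_{i\geq 0} $ converging to $ g $ pointwise $ \mathcal{L}^d $-a.e., and therefore also $ \vert \Wrt^a u\vert $-a.e.\ by absolute continuity. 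Passing to a further subsequence (not relabelled) of the $ L^1(\Omega,\mathbb{R}^d;\vDu) $-convergent sequence, I obtain $ g_{n_i} \rightarrow Tg $ pointwise $ \vDu $-a.e., and hence $ \vert \Wrt^a u\vert $-a.e.\ as well since $ \vert \Wrt^a u\vert \leq \vDu $. Along this common subsequence both pointwise limits are available $ \vert \Wrt^a u\vert $-a.e., so uniqueness of the pointwise limit forces $ Tg = g $ $ \vert \Wrt^a u\vert $-a.e., which is the first assertion.

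For the bound $ \Vert Tg\Vert_\infty \leq \Vert g\Vert_\infty $ I would use condition~(2) of $ \overset{\sim}{\rightarrow} $, namely $ \Vert g_n\Vert_\infty \leq \Vert g\Vert_\infty $ for every $ n $. Along the subsequence converging to $ Tg $ pointwise $ \vDu $-a.e., passing to the limit in $ \vert g_{n_i}\vert \leq \Vert g\Vert_\infty $ gives $ \vert Tg\vert \leq \Vert g\Vert_\infty $ $ \vDu $-a.e., which is exactly $ \Vert Tg\Vert_\infty \leq \Vert g\Vert_\infty $.

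I expect the only genuinely delicate point to be the bookkeeping of the three measures involved, namely $ \mathcal{L}^d $, $ \vDu $, and $ \vert \Wrt^a u\vert $, together with the nested subsequence extractions: one must arrange that the two modes of almost-everywhere convergence refer to the \emph{same} subsequence before the limits may be compared, and one must correctly route $ \mathcal{L}^d $-a.e.\ convergence into $ \vert \Wrt^a u\vert $-a.e.\ convergence via $ \vert \Wrt^a u\vert \ll \mathcal{L}^d $, while routing $ \vDu $-a.e.\ convergence into $ \vert \Wrt^a u\vert $-a.e.\ convergence via $ \vert \Wrt^a u\vert \leq \vDu $. Once these two routings are in place, everything else is routine.
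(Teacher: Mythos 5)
Your proposal is correct and follows essentially the same route as the paper: extract an $\mathcal{L}^d$-a.e.\ convergent subsequence from the $L^q$-convergence, a further $\vDu$-a.e.\ convergent subsequence from the $L^1(\Omega,\mathbb{R}^d;\vDu)$-convergence to $Tg$, compare the two limits $\vert \Wrt^a u\vert$-a.e.\ via the Lebesgue decomposition, and pass the uniform bound $\Vert g_n\Vert_\infty \leq \Vert g\Vert_\infty$ to the pointwise limit. The only difference is presentational: you spell out the subsequence bookkeeping more explicitly than the paper does.
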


\begin{prop}\label{prop:basic_prop_full_trace_2}For $ u\in \BV (\Omega ) $ and for any $ \phi \in C(\overline{\Omega},\mathbb{R}^d ) \cap W^q  (\dive ; \Omega ) $, it follows that $ \phi \in D $ and \[ T\phi=\phi \] as a function in $ L^1 (\Omega,\mathbb{R}^d ; \vDu ) $. If, in addition, $ u \in L^p (\Omega ) $ with $ p = \frac{q}{q-1} $ for $ 1<q<\infty $ and $ p=\infty $ for $ q=1 $ such that the normal trace operator, mapping to $ L^1(\Omega ;\vDu) $, is defined on $ D $, then for any $ g\in D $ we have that \[ T_N g = Tg \cdot \sigma _u .\] 
\begin{proof}
For the first assertion, we need to show that for any $ (\phi_n)_{n\geq 0} \overset{\sim}{\rightarrow} \phi $,
\[\intop _\Omega \vert \phi _n - \phi \vert \wrt \vDu \rightarrow 0 \mbox{ as } n \rightarrow \infty. \]
But this follows from Lebesgue's dominated convergence theorem, using that $\vert \phi _n - \phi \vert \leq 2\Vert \phi \Vert _\infty $ and that for continuous functions every point is a Lebesgue point.
Now take $ g\in D $ and assume $ u\in L^p (\Omega )  $. Since $ D\subset L^\infty (\Omega,\mathbb{R}^d) $, the normal trace $ T_N g $ is defined and, with $ (g_n)_{n\geq 0} $ as in Proposition \ref{prop:w_div_approximation}, we have
\begin{equation*}
\intop _\Omega \vert Tg\cdot \sigma _u -T_N g_n \vert \wrt \vDu \leq \intop _\Omega \vert Tg - g_n \vert \wrt \vDu \rightarrow 0.
\end{equation*}
where we used that, by Proposition \ref{prop:normal_trace_operator}, $ T_N g_n = g_n \cdot \sigma _u $ and that $ \vert \sigma _u \vert = 1 $. By closedness of $ T_N $ the second assertion follows. 
\end{proof}
\end{prop}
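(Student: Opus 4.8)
\section*{Proof proposal}

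The plan is to treat the two assertions separately, taking $v=\phi$ as the obvious candidate for the full trace in the first part, and then exploiting the closedness of $T_N$ established in Proposition \ref{prop:normal_trace_operator} for the second.

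For the first assertion I would verify directly that $\phi\in D$ by checking that $v:=\phi$ satisfies condition \eqref{tg_v}. Concretely, I would take an arbitrary sequence $(\phi_n)_{n\geq 0}\subset C^\infty(\overline{\Omega},\mathbb{R}^d)$ with $\phi_n\overset{\sim}{\rightarrow}\phi$ and show that $\intop_\Omega |\phi_n-\phi|\dd\vDu\to 0$. The key observation is that, since $\phi$ is continuous on $\overline{\Omega}$, every point of $\Omega$ is a Lebesgue point of $\phi$, so property~(3) of Definition \ref{defn:notion_of_convergence} upgrades to pointwise convergence $\phi_n(x)\to\phi(x)$ everywhere, in particular $\vDu$-a.e. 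Combined with the uniform bound $|\phi_n-\phi|\leq \|\phi_n\|_\infty+\|\phi\|_\infty\leq 2\|\phi\|_\infty$ from property~(2), and with the finiteness of the measure $\vDu$, Lebesgue's dominated convergence theorem (with respect to $\vDu$) yields the claim, establishing both $\phi\in D$ and $T\phi=\phi$.

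For the second assertion, fix $g\in D$; by Proposition \ref{prop:w_div_approximation} there is a sequence $(g_n)_{n\geq 0}\subset C^\infty(\overline{\Omega},\mathbb{R}^d)$ with $g_n\overset{\sim}{\rightarrow}g$. Since each $g_n$ is continuous, Proposition \ref{prop:normal_trace_operator} gives $T_N g_n=g_n\cdot\sigma_u$. On the one hand, $g_n\to g$ in $W^q(\dive;\Omega)$, which is exactly convergence in the domain norm of $T_N$. On the other hand, the defining property of the full trace gives $g_n\to Tg$ in $L^1(\Omega,\mathbb{R}^d;\vDu)$, and since $|\sigma_u|=1$ holds $\vDu$-a.e.\ (as $\sigma_u$ is the density in the polar decomposition $\Du=\sigma_u\vDu$), the pointwise estimate $|(Tg-g_n)\cdot\sigma_u|\leq |Tg-g_n|$ shows $T_N g_n=g_n\cdot\sigma_u\to Tg\cdot\sigma_u$ in $L^1(\Omega;\vDu)$, where $Tg\cdot\sigma_u$ indeed lies in $L^1(\Omega;\vDu)$ because $Tg\in L^1(\Omega,\mathbb{R}^d;\vDu)$ and $\sigma_u$ is bounded. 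Invoking the closedness of $T_N$ then forces $T_N g=Tg\cdot\sigma_u$.

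I expect the substance of the argument to lie not in lengthy estimates — both steps reduce to dominated convergence and a one-line contraction inequality — but in the careful coordination of the two a priori different convergence structures. The crucial leverage is the closedness of $T_N$ from Proposition \ref{prop:normal_trace_operator}: it is what permits passing from the continuous approximants, for which $T_N g_n=g_n\cdot\sigma_u$ is explicit, to the limit $g$ itself. The one point requiring care is ensuring that $L^1(\Omega,\mathbb{R}^d;\vDu)$-convergence of the vector fields transfers to $L^1(\Omega;\vDu)$-convergence of their normal components, which is precisely where $|\sigma_u|=1$ enters; the hypothesis $u\in L^p(\Omega)$ is needed only so that $T_N$, and hence $T_N g$, is defined on all of $D$.
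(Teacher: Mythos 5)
Your proposal is correct and follows essentially the same route as the paper: dominated convergence with the bound $2\Vert\phi\Vert_\infty$ and the fact that every point is a Lebesgue point of a continuous function for the first assertion, and the estimate $\vert(Tg-g_n)\cdot\sigma_u\vert\leq\vert Tg-g_n\vert$ together with $T_N g_n=g_n\cdot\sigma_u$ and the closedness of $T_N$ for the second. No substantive differences.
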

Note that, by density of $ C(\overline{\Omega},\R^d) $ in $ W^q(\dive;\Omega) $, Proposition \ref{prop:basic_prop_full_trace_2} in particular implies that the full trace operator is densely defined.

In \cite[Theorem 1.9]{Anzellotti83} it was shown that, for $ u\in \BV (\Omega ) \cap L^p (\Omega )$ and $ g \in W^q (\dive ;\Omega ) \cap L^\infty (\Omega,\mathbb{R}^d ) $, with $ p = \frac{q}{q-1} $ for $ 1<q<\infty $ and $ p=\infty $ for $ q=1 $, denoting by $ \theta(g,\Du) $ the normal trace of $ g $ as in Proposition \ref{prop:normal_trace_operator}, the following Gauss-Green formula holds:
\[
\intop _\Omega u \dive g \wrt x+ \intop _\Omega \theta (g,\Du) \, \vDu = \intop _{\partial \Omega } [g \cdot \nu ] u^\Omega \wrt \mathcal{H} ^{d-1},
\]
where $ [g\cdot \nu] \in L^\infty (\partial \Omega ;\mathcal{H}^{d-1})$ and $ u^\Omega \in L^1 (\partial \Omega; \mathcal{H}^{d-1} )$ denote the boundary trace functions of $ g $ and $ u $, respectively.
As an immediate consequence of this and Proposition \ref{prop:basic_prop_full_trace_2}, we can present a Gauss-Green formula for the full trace:
\begin{cor} \label{cor:full_trace_gauss_green}
For $ g\in D $, $ u\in \BV (\Omega ) \cap L^p (\Omega ) $ and $[g \cdot \nu]$ as in \cite[Theorem 1.2]{Anzellotti83}, with $ p = \frac{q}{q-1} $ for $ 1<q<\infty $ and $ p=\infty $ for $ q=1 $, we have
\[
\intop _\Omega u \dive g \wrt x + \intop _\Omega Tg\, \Wrt u = \intop _{\partial \Omega } [g \cdot \nu ] u^\Omega \wrt \mathcal{H} ^{d-1}.
\]
\end{cor}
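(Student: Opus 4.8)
\emph{Proof proposal.} The plan is to obtain the formula directly from the Gauss-Green identity for the normal trace recalled just above, combined with the identity $T_N g = Tg \cdot \sigma_u$ from Proposition \ref{prop:basic_prop_full_trace_2}. Since $g\in D\subset W^q(\dive;\Omega)\cap L^\infty(\Omega,\R^d)$ and $u\in\BV(\Omega)\cap L^p(\Omega)$, the normal trace $\theta(g,\Du)=T_N g\in L^1(\Omega;\vDu)$ and the boundary trace $[g\cdot\nu]$ are both available, so Anzellotti's identity
\[
\intop_\Omega u\dive g\wrt x + \intop_\Omega \theta(g,\Du)\,\vDu = \intop_{\partial\Omega}[g\cdot\nu]u^\Omega\wrt\mathcal{H}^{d-1}
\]
applies verbatim. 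It therefore suffices to rewrite the middle term in terms of the full trace.

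The key step is to identify $\intop_\Omega Tg\,\Wrt u$ with $\intop_\Omega\theta(g,\Du)\,\vDu$. Recalling that $\Wrt u=\sigma_u\vDu$ with $|\sigma_u|=1$ holding $\vDu$-a.e., and reading $\intop_\Omega Tg\,\Wrt u$ as the integral of the vector field $Tg\in L^1(\Omega,\R^d;\vDu)$ against the vector measure $\Wrt u$, I would compute
\[
\intop_\Omega Tg\,\Wrt u = \intop_\Omega (Tg\cdot\sigma_u)\wrt\vDu = \intop_\Omega T_N g\wrt\vDu = \intop_\Omega\theta(g,\Du)\,\vDu,
\]
where the second equality is exactly Proposition \ref{prop:basic_prop_full_trace_2}, whose hypothesis $u\in L^p(\Omega)$ is precisely the one assumed in the corollary. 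Substituting this equality into Anzellotti's formula then yields the claimed Gauss-Green identity for the full trace.

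The argument is essentially bookkeeping and carries no genuine analytic obstacle; the only points requiring care are that the hypotheses of the two cited results hold simultaneously — namely $g\in D$, which both guarantees the existence of $Tg$ and (via $D\subset L^\infty$) makes $T_N g$ and $[g\cdot\nu]$ meaningful, and $u\in\BV(\Omega)\cap L^p(\Omega)$, which is needed for Anzellotti's trace theorem as well as for the identity $T_N g=Tg\cdot\sigma_u$ — and that the vector integral $\intop_\Omega Tg\,\Wrt u$ is correctly interpreted as $\intop_\Omega (Tg\cdot\sigma_u)\,\vDu$, which is exactly what makes its integrand coincide with the scalar normal trace.
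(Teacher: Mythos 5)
Your proposal is correct and follows exactly the route the paper intends: the corollary is stated as an immediate consequence of Anzellotti's Gauss--Green formula and Proposition \ref{prop:basic_prop_full_trace_2}, and your substitution $\intop_\Omega Tg\,\Wrt u = \intop_\Omega (Tg\cdot\sigma_u)\wrt\vDu = \intop_\Omega \theta(g,\Du)\,\vDu$ is precisely the bookkeeping that makes this work. Your attention to the compatibility of hypotheses ($g\in D\subset L^\infty$ and $u\in\BV(\Omega)\cap L^p(\Omega)$) is appropriate and matches the standing assumptions of the cited results.
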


\subsection{Subdifferential characterization}
We will now use the notion of full trace to describe the sub\-differential of the $ \TV $ functional.
In order to do so, we first remember a well known result, which provides a characterization by using an integral equation. Note that here we define
\[\TV : L^p (\Omega ) \rightarrow \overline{\mathbb{R}}, \quad 1 < p \leq \frac{d}{d-1},\]
as
\[\TV (u) = \sup \left\{ \intop _\Omega u \dive \phi \,   \bigg \vert \, \phi \in C^\infty _c (\Omega,\mathbb{R}^d) ,\, \Vert \phi \Vert _\infty \leq 1 \right\} \]
where $ \TV $ may also attain the value $ \infty $. 
\begin{prop}[Integral characterization]
\label{prop:integral_characterization} Let $ \Omega \subset \mathbb{R}^d $ with $ d\geq 2 $, $ 1<p\leq\frac{d}{d-1} $, $ q=\frac{p}{p-1} $ and $u\in L^p (\Omega) $, $u^{*}\in L^q (\Omega).$
Then $u^{*}\in\partial\TV(u)$ if and only if 
\begin{eqnarray*}
\left\{
\begin{gathered} u\in\BV(\Omega)\mbox{ and there exists }g\in W_{0}^{q}(\dive;\Omega)\\ \mbox{  with } \Vert g\Vert_{\infty}\leq1
\mbox{ such that }u^{*}=-\dive g\mbox{ and }\\
\intop_{\Omega}\mathbf{1}\wrt\vert \Wrt u\vert=-\intop_{\Omega}u\dive g .
\end{gathered}
\right.
\end{eqnarray*}
\begin{proof}
For the sake of completeness, we elaborate on the proof: Denoting by $ C=\left\{ \dive \phi \, \vert \, \phi \in C^\infty _c (\Omega,\mathbb{R}^d ) , \, \Vert \phi \Vert _\infty \leq 1 \right\} $, we have
\[\TV (u) = \mathcal{I}^* _C (u), \]where $ \mathcal{I}_C^* $ denotes the polar of $ \mathcal{I}_C $ \cite[Definition I.4.1]{Ekeland}, and, consequently, see \cite[Example I.4.3]{Ekeland},
\[\TV^* (u^*) = \mathcal{I}^{**} _C (u^*) = \mathcal{I}_{\overline{C}} (u^*)\]
where the closure of $ C$ is taken with respect to the $ L^q $ norm. Using the equivalence \cite[Proposition I.5.1]{Ekeland}
\[ u^* \in \partial \TV (u) \quad \Leftrightarrow \quad \TV (u) + \TV ^* (u^*) = ( u,u^* )_{L^p,L^q}, \]
it therefore suffices to show that \[ \overline{C}=\left\{ \dive g \, \vert \, g\in W^q_0 (\dive ,\Omega), \, \Vert g \Vert _\infty \leq 1 \right\} =:K\] to obtain the desired assertion.
Since clearly $ C\subset K $, it is sufficient for $ \overline{C}\subset K $ to show that $ K $ is closed with respect to the $ L^q $ norm. For this purpose take $ (g_n)_{n\geq 0 } \subset W^q_0 (\dive;\Omega)  $ with $ \Vert g_n \Vert _\infty \leq 1 $ such that
\[\dive g_n \rightarrow h\mbox{ in } L^q (\Omega ) \mbox{ as } n\rightarrow \infty .\]
By boundedness of $ (g_n)_{n\geq 0} $ there exists a subsequence $ (g_{n_i})_{i\geq 0 } $ weakly converging to some $ g\in L^q (\Omega ,\mathbb{R}^d) $. Now for any $ \phi \in C^\infty_c  (\Omega) $, 
\[\intop _\Omega g\cdot \nabla \phi = \underset{i\rightarrow \infty }{\lim} \intop _\Omega g_{n_i} \cdot \nabla \phi = \underset{i\rightarrow \infty }{\lim} -\intop _\Omega \dive (g_{n_i} )\phi = -\intop _\Omega h \phi, \] from which follows that $ g \in W^q (\dive ; \Omega )  $ and $ \dive g = h $. To show that $ \Vert g \Vert _\infty \leq 1 $ and $ g \in W^q _0 (\dive ; \Omega ) $ note that the set
\[ \left\{ (f,\dive f) \, \vert \, f \in W^q _0 (\dive ;\Omega), \, \Vert f\Vert_\infty  \leq 1 \right\}\subset L^q(\Omega ,\mathbb{R}^{d+1})\]
forms a convex and closed -- and therefore weakly closed -- subset of $ L^q(\Omega,\mathbb{R}^{d+1}) $ \cite[Section I.1.2]{Ekeland}. Since the sequence $ ((g_{n_i},\dive g_{n_i}))_{i\geq 0 } $ is contained in this set and converges weakly in $ L^q(\Omega,\mathbb{R}^{d+1} ) $ to $ (g,\dive g) $, we have $ g\in W^q_0 (\dive ; \Omega ) $ and $ \Vert g \Vert _\infty \leq 1 $, hence $ \dive g \in K $. For $ K \subset \overline{C} $ it suffices to show that, for any $ g \in W^q_0 (\dive ;\Omega )$ with $ \Vert g \Vert_\infty \leq 1 $ fixed, we have for all $ v\in L^p (\Omega) $ that
\[ \intop _\Omega v \dive g \leq \TV (v) \] since this implies $ \TV^* (\dive g)  = \mathcal{I}_{\overline{C}} (\dive g ) = 0 $. Now for such a $ v\in L^p (\Omega) $ we can assume that $ v\in \BV (\Omega) $ since in the other case the inequality is trivially satisfied. Thus we can take a sequence $ (v _n )_{n\geq 0} \subset C^\infty (\overline{\Omega} )$ strictly converging to $ v $ \cite[Theorem 3.9]{Ambrosio}, for which we can also assume that $ v_n \rightarrow v $ with respect to $ \Vert  \cdot \Vert _{L^p} $. Using Remark \ref{rem:gauss_green_w0_div} it follows
\begin{eqnarray*}
\intop _\Omega v \dive g  &= &\limn \intop _\Omega v_n \dive g = \limn -\intop _\Omega \nabla v_n \cdot g \\ & \leq & \limn \intop _\Omega \vert \nabla v_n \vert \vert g \vert \leq \limn \TV (v_n) = \TV (v).\, \qedhere
\end{eqnarray*}
\end{proof}
\end{prop}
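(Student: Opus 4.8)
The plan is to deduce the characterization from convex duality, reducing everything to an explicit computation of $\TV^*$ read through the Fenchel--Young identity. Recall that for the convex subdifferential one always has the equivalence
\[
u^* \in \subdif\TV(u) \quad\Longleftrightarrow\quad \TV(u) + \TV^*(u^*) = \scp{u}{u^*}_{L^p,L^q}
\]
(cf.\ \cite[Proposition I.5.1]{Ekeland}), provided $\TV(u)$ is finite. So the whole statement will follow once $\TV^*$ is identified. First I would observe, directly from the definition of the total variation as a supremum, that $\TV = \mathcal{I}_C^*$ with
$C = \{\dive \phi : \phi \in C^\infty_c(\Omega,\R^d),\ \Vert \phi\Vert_\infty \le 1\} \subset L^q(\Omega)$, since $\mathcal{I}_C^*(u) = \sup_{w\in C}\scp{w}{u} = \TV(u)$. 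Biconjugation then gives $\TV^* = \mathcal{I}_C^{**} = \mathcal{I}_{\overline{C}}$, the indicator of the $L^q$-closure of the convex set $C$.

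The crux is to identify this closure as
\[
\overline{C} = K := \{\dive g : g \in W_0^q(\dive;\Omega),\ \Vert g\Vert_\infty \le 1\},
\]
which I would prove by two inclusions. Since $C^\infty_c(\Omega,\R^d) \subset W_0^q(\dive;\Omega)$ we have $C \subset K$, so for $\overline{C}\subset K$ it suffices to show that $K$ is $L^q$-closed. Given $(g_n)_n \subset W_0^q(\dive;\Omega)$ with $\Vert g_n\Vert_\infty \le 1$ and $\dive g_n \to h$ in $L^q$, boundedness in $L^\infty \hookrightarrow L^q$ (using that $\Omega$ is bounded) yields a subsequence $g_{n_i} \wrightarrow g$ weakly in $L^q$; passing to the limit in the weak divergence identity against $C^\infty_c(\Omega)$ test functions shows $g\in W^q(\dive;\Omega)$ with $\dive g = h$. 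To upgrade this to $g \in W_0^q(\dive;\Omega)$ with $\Vert g\Vert_\infty \le 1$, I would use that the set $\{(f,\dive f): f\in W_0^q(\dive;\Omega),\ \Vert f\Vert_\infty \le 1\}$ is convex and norm-closed in $L^q(\Omega,\R^{d+1})$, hence weakly closed, and that $(g_{n_i},\dive g_{n_i})$ converges to $(g,\dive g)$ weakly there; this gives $h = \dive g \in K$.

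For the reverse inclusion $K\subset\overline{C}$, I would show that every $w=\dive g$ with $g\in W_0^q(\dive;\Omega)$, $\Vert g\Vert_\infty \le 1$, satisfies $\TV^*(w)=0$ and hence lies in $\overline{C}$, since $\TV^*=\mathcal{I}_{\overline{C}}$. As $\TV^*(w)\ge 0$ trivially, it is enough to verify $\scp{v}{w}\le\TV(v)$ for all $v\in L^p(\Omega)$, the inequality being vacuous unless $v\in\BV(\Omega)$. For $v\in\BV(\Omega)$ I would take $v_n \in \Coinfty$ strictly converging to $v$ (cf.\ \cite[Theorem 3.9]{Ambrosio}), arranged to converge also in $L^p$, and apply the Gauss--Green identity of Remark \ref{rem:gauss_green_w0_div} to estimate $\int_\Omega v_n w = -\int_\Omega \nabla v_n \cdot g \le \int_\Omega |\nabla v_n| = \TV(v_n) \to \TV(v)$, while $\int_\Omega v_n w \to \int_\Omega v w$ by $L^p$-convergence together with $w\in L^q$.

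Having established $\overline{C}=K$, the proof closes by reading off the Fenchel--Young equality. If $u^*\in\subdif\TV(u)$ then $\TV(u)<\infty$, so $u\in\BV(\Omega)$, and finiteness of the right-hand side forces $\TV^*(u^*)=\mathcal{I}_K(u^*)=0$, i.e.\ $u^*=-\dive g$ for some admissible $g$; the residual scalar identity $\TV(u)=\scp{u}{u^*}$ becomes exactly $\int_\Omega \ones \wrt\vDu = -\int_\Omega u\,\dive g$ once one recalls $\TV(u)=\vDu(\Omega)$. The converse direction is the same computation run backwards. I expect the main obstacle to be the closedness of $K$, specifically justifying that the weak $L^q$-limit of the $g_n$ remains in $W_0^q(\dive;\Omega)$ with norm bound $\le 1$: this is where the convexity and weak closedness of the graph set does the real work, since soft compactness alone recovers only a weak $L^q$-limit without control of the $L^\infty$-constraint or the boundary behaviour encoded in $W_0^q$.
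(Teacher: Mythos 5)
Your proposal is correct and follows essentially the same route as the paper's own proof: the Fenchel--Young equivalence combined with the identification $\TV^* = \mathcal{I}_{\overline{C}}$, closedness of $K$ via weak $L^q$-compactness plus weak closedness of the convex graph set $\{(f,\dive f) : f \in W_0^q(\dive;\Omega),\ \Vert f \Vert_\infty \le 1\}$, and the reverse inclusion through the inequality $\int_\Omega v \dive g \le \TV(v)$ obtained by strict smooth approximation and the Gauss--Green identity of Remark \ref{rem:gauss_green_w0_div}. Every step, including the point you flag as the main obstacle, matches the paper's argument.
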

\begin{rem} \label{rem:triv_subdif_inequ}
Note that in the last part of the proof of Proposition \ref{prop:integral_characterization} we have in particular shown that for any $ g \in W^q_0 (\dive ;\Omega )$ with $ \Vert g \Vert_\infty \leq 1 $, where $ q=\frac{p}{p-1} $ and $ 1<p\leq\frac{d}{d-1} $, and any $ v\in L^p (\Omega) $, the inequality
\[ \intop _\Omega v \dive g \leq \TV (v) \]
holds.
\end{rem}

Using Proposition \ref{prop:integral_characterization}, we can derive the main result of the paper, a characterization of the subdifferential of the $ \TV $ functional in terms of the full trace operator.
\begin{thm}[Pointwise characterization]
\label{thm:pointwise_characterization} With the assumptions of Proposition \ref{prop:integral_characterization}
we have that $u^{*}\in\subdif\TV(u)$ if and only if 
\begin{eqnarray*}
\left\{
\begin{gathered}
u\in\BV (\Omega) \mbox{ 
and there exists } g\in W_{0}^{q}(\dive;\Omega)\\
\mbox{  with } \Vert g\Vert_{\infty}\leq1 \mbox{ such that } u^{*}=-\dive g \mbox{ and }\\
 Tg = \sigma _u \mbox{ in } L^{1}(\Omega,\mathbb{R}^{d}; \vDu ) ,
\end{gathered}
\right.
\end{eqnarray*}
where $\sigma _u$ is the density of $\Du$ w.r.t. $\vDu$.
\end{thm}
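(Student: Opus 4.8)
The plan is to take Proposition \ref{prop:integral_characterization} as the starting point: it already characterizes $u^*\in\subdif\TV(u)$ by the existence of $g\in W_0^q(\dive;\Omega)$ with $\|g\|_\infty\le 1$, $u^*=-\dive g$, and the integral identity $\int_\Omega\ones\wrt\vDu=-\int_\Omega u\dive g$. So, under the standing hypotheses $u\in\BV(\Omega)\cap L^p(\Omega)$, $g\in W_0^q(\dive;\Omega)$, $\|g\|_\infty\le 1$, $u^*=-\dive g$, it suffices to prove that this integral identity is equivalent to $Tg=\sigma_u$. I would route both implications through the normal trace, showing first that the integral identity is equivalent to $T_N g=1$ in $L^1(\Omega;\vDu)$, and then that $T_N g=1$ is equivalent to $Tg=\sigma_u$.

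For the first equivalence I would invoke the Gauss--Green formula recalled just before Corollary \ref{cor:full_trace_gauss_green}. Since $g\in W_0^q(\dive;\Omega)$ is a limit of $C_c^\infty(\Omega,\R^d)$ fields, its normal boundary trace vanishes, so the formula collapses to $\int_\Omega u\dive g+\int_\Omega\theta(g,\Du)\wrt\vDu=0$, i.e.\ $\int_\Omega T_N g\wrt\vDu=-\int_\Omega u\dive g$ (the hypothesis $u\in L^p(\Omega)$ guarantees the normal trace is defined). As $\|T_N g\|_\infty\le\|g\|_\infty\le 1$ by Proposition \ref{prop:normal_trace_operator}, the identity $-\int_\Omega u\dive g=\vDu(\Omega)$ holds if and only if $T_N g=1$ $\vDu$-a.e. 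The direction $Tg=\sigma_u\Rightarrow T_N g=1$ of the second equivalence is then immediate from Proposition \ref{prop:basic_prop_full_trace_2}, which gives $T_N g=Tg\cdot\sigma_u=|\sigma_u|^2=1$.

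The converse $T_N g=1\Rightarrow Tg=\sigma_u$ is the heart of the proof. I would fix an arbitrary $g_n\in C^\infty(\overline\Omega,\R^d)$ with $g_n\overset{\sim}{\rightarrow} g$ (these exist by Proposition \ref{prop:w_div_approximation}) and show $\|g_n-\sigma_u\|_{L^1(\Omega,\R^d;\vDu)}\to 0$; by Definition \ref{defn:Full_trace} this establishes $g\in D$ and $Tg=\sigma_u$. The crucial device is localization against a cut-off $\psi\in C_c^\infty(\Omega)$ with $0\le\psi\le 1$, which eliminates all boundary contributions: by Proposition \ref{prop:Dual Operator} together with $\theta(g_n,\Du)=g_n\cdot\sigma_u$ (Proposition \ref{prop:normal_trace_operator}), one has $\int_\Omega(g_n\cdot\sigma_u)\psi\wrt\vDu=-\int_\Omega u\dive(g_n\psi)$, and since $\dive(g_n\psi)\to\dive(g\psi)$ in $L^q(\Omega)$ while $\theta(g,\Du)=T_N g=1$, the right-hand side tends to $\int_\Omega\psi\wrt\vDu$. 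Exploiting $|g_n|\le 1$ and $|\sigma_u|=1$ via $|g_n-\sigma_u|^2\le 2(1-g_n\cdot\sigma_u)$, I obtain $\int_\Omega|g_n-\sigma_u|^2\psi\wrt\vDu\to 0$ for each such $\psi$.

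Finally I would remove the cut-off: since $\vDu$ is a finite Radon measure, for every $\varepsilon>0$ there is a $\psi$ as above with $\int_\Omega(1-\psi)\wrt\vDu<\varepsilon$; splitting $\int_\Omega|g_n-\sigma_u|\wrt\vDu$ into its $\psi$- and $(1-\psi)$-parts, estimating the first by Cauchy--Schwarz in terms of $\int_\Omega|g_n-\sigma_u|^2\psi\wrt\vDu$ and $\vDu(\Omega)$ and the second by $2\varepsilon$, gives $\limsup_n\int_\Omega|g_n-\sigma_u|\wrt\vDu\le 2\varepsilon$, hence the desired $L^1$-convergence. The main obstacle is precisely this last passage: the approximants converge pointwise only $\mathcal L^d$-a.e., thus only on the absolutely continuous part $|\Wrt^a u|$ of $\vDu$, and nothing can be asserted pointwise on the singular part $|\Wrt^s u|$; it is the localized $L^2$-estimate combined with the exhaustion by cut-offs that forces convergence there, upgrading $L^2$ control to $L^1$. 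A secondary technical point I would need to pin down is the vanishing of the Anzellotti boundary trace on $W_0^q(\dive;\Omega)$, which I would justify through the continuity of the normal trace on $W^q(\dive;\Omega)$ and the density of $C_c^\infty(\Omega,\R^d)$.
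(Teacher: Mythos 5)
Your proposal is correct, and it reaches the paper's pivot point --- reducing everything to the integral identity of Proposition \ref{prop:integral_characterization} and then exploiting $1-g_n\cdot\sigma_u\geq\tfrac{1}{2}\vert g_n-\sigma_u\vert^2$ plus Cauchy--Schwarz --- by a genuinely different route. The paper never passes through the normal trace: it extends $u$ by zero to $\R^d$, invokes \cite[Corollary 3.89]{Ambrosio} to write $-\int_\Omega u\dive g_n=\int_\Omega g_n\cdot\sigma_u\wrt\vDu+\int_{\partial\Omega}(g_n\cdot\nu_\Omega)u^\Omega\wrt\mathcal{H}^{d-1}$, and then kills the boundary term by approximating $u^\Omega$ with traces of smooth functions (strict density in $\BV(\Omega)$ and continuity of the trace) together with the classical Gauss--Green theorem. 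You instead interpose the equivalence with $T_Ng=1$ and work entirely in the interior: the localized identity $\int_\Omega(g_n\cdot\sigma_u)\psi\wrt\vDu=-\int_\Omega u\dive(g_n\psi)$ requires no boundary information, at the price of the final exhaustion step, which is sound since $\vDu$ is a finite (hence inner regular) measure and $\vert g_n-\sigma_u\vert\leq 2$. What your version buys is that the main estimate avoids the zero-extension and the $\BV$ boundary-trace machinery; what it costs is that both implications now lean on Anzellotti's Gauss--Green formula together with the vanishing of $[g\cdot\nu]$ for $g\in W_0^q(\dive;\Omega)\cap L^\infty(\Omega,\R^d)$, the ``secondary technical point'' you flag. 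That fact is true and your sketch closes it (more directly: Remark \ref{rem:gauss_green_w0_div} gives $\int_{\partial\Omega}[g\cdot\nu]\,v\wrt\mathcal{H}^{d-1}=0$ for all $v\in C^\infty(\overline{\Omega})$, and density of such traces in $L^1(\partial\Omega;\mathcal{H}^{d-1})$ forces $[g\cdot\nu]=0$), but be aware that it carries essentially the same analytic content as the boundary-term computation the paper writes out, so in a final version it should be proved rather than asserted.
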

\begin{proof}
Let $u^{*}\in\partial \TV(u)$: Using Proposition \ref{prop:integral_characterization}, with $ g\in W^q _0 (\dive ,\Omega) $ provided there, it suffices to show that, for $ (g_n)_{n\geq 0}  \subset C^\infty (\overline{\Omega},\mathbb{R}^d)$ such that $ g_n \overset{\sim}{\rightarrow} g $ it follows 
\begin{equation*}
\Vert \sigma _u - g_n \Vert _{L^1 (\Omega,\mathbb{R}^d ; \vDu )} \rightarrow 0.
\end{equation*}
Testing the zero extension of $ u $, denoted by $ w\in \BV (\mathbb{R}^d) $, with $ (g_n)_{n\geq0} $ extended to be in $ C^1 (\mathbb{R}^d,\mathbb{R}^d) $ yields, by virtue of \cite[Corollary 3.89]{Ambrosio},
\begin{eqnarray}
\intop_{\Omega}\mathbf{1}\wrt \vDu & = &- \intop_{\Omega}u\dive g \wrt x =  \underset{n\rightarrow\infty}{\lim}-\intop_{\Omega}u\dive g_{n} \wrt x \nonumber \\
 & = & \underset{n\rightarrow\infty}{\lim}-\intop_{\mathbb{R}^d}w\dive g_{n} \wrt x = \limn \intop _{\mathbb{R}^d} g_n \wrt \Wrt w \nonumber \\
 & = & \underset{n\rightarrow\infty}{\lim}\left(\intop_{\Omega}g_{n} \cdot \sigma _u\wrt \vDu +\intop_{\partial\Omega}(g_{n}\cdot \nu_{\Omega})u^{\Omega}\wrt\mathcal{H}^{d-1}\right) \label{eq:main_estimation_subdif_char}
 \end{eqnarray}
where, $u^{\Omega}\in L^{1}(\partial\Omega;\mathcal{H}^{d-1})$
denotes the trace of $u$ on $ \partial \Omega $ and $\nu_{\Omega}$ is the generalized inner
unit normal vector of $\partial\Omega$. Next, we like to show that the boundary term vanishes as $ n\rightarrow \infty $. By density of $C^{\infty}(\overline{\Omega})$ in
$\BV(\Omega)$ and continuity of the trace operator for $\BV$ functions with respect to strict convergence (see  \cite[Theorem 3.88]{Ambrosio}),
for arbitrary $\epsilon>0$, there exists $\phi_{\epsilon}\in C^{\infty}(\overline{\Omega})$
such that $\Vert u^{\Omega}-\phi_{\epsilon}^{\Omega}\Vert_{L^{1}(\partial\Omega)}<\epsilon$.
By the standard Gauss-Green theorem we can write\[
\intop_{\partial\Omega}(g_{n}\cdot\nu_{\Omega})\phi_{\epsilon}\wrt\mathcal{H}^{d-1}=-\intop_{\Omega}\dive (g_{n})\phi_{\epsilon} \wrt x-\intop_{\Omega} g_{n}\cdot\nabla\phi_{\epsilon} \wrt x\]
and taking the limit as $n\rightarrow\infty$ we get, by $ g_n \rightarrow g $ in $ W^q (\dive; \Omega ) $,\begin{eqnarray*}
\underset{n\rightarrow\infty}{\lim}\intop_{\partial\Omega}(g_{n}\cdot\nu_{\Omega})\phi_{\epsilon}\wrt\mathcal{H}^{d-1} & = & \underset{n\rightarrow\infty}{\lim}\left(-\intop_{\Omega}\dive (g_{n})\phi_{\epsilon} \wrt x-\intop_{\Omega}g_{n}\cdot\nabla\phi_{\epsilon} \wrt x\right)\\
 & = &- \intop_{\Omega}\dive (g)\phi_{\epsilon} \wrt x-\intop_{\Omega}g\cdot\nabla\phi_{\epsilon} \wrt x=0.\end{eqnarray*}
For $n\in\mathbb{N}$ we thus have, since $ \Vert g_n \Vert _\infty \leq \Vert g \Vert _\infty $,

\begin{eqnarray*}
\left|\,\intop_{\partial\Omega}(g_{n}\cdot\nu_{\Omega})u^{\Omega}\wrt\mathcal{H}^{d-1}\right| & = & \left|\,\intop_{\partial\Omega}(g_{n}\cdot\nu_{\Omega})(u^{\Omega}-\phi_{\epsilon})+(g_{n}\cdot\nu_{\Omega})\phi_{\epsilon}\wrt\mathcal{H}^{d-1}\right|\\
 & \leq & \Vert g_{n}\Vert_{\infty}\Vert u^{\Omega}-\phi_{\epsilon}\Vert_{L^{1}(\partial\Omega)}+\left|\,\intop_{\partial\Omega}(g_{n}\cdot\nu_{\Omega})\phi_{\epsilon}\wrt\mathcal{H}^{d-1}\right|\\
 & \leq & \epsilon+\left|\,\intop_{\partial\Omega}(g_{n}\cdot\nu_{\Omega})\phi_{\epsilon}\wrt\mathcal{H}^{d-1}\right|.\end{eqnarray*}
Hence \[
\underset{n}{\limsup}\left|\,\intop_{\partial\Omega}(g_{n}\cdot\nu_{\Omega})u^{\Omega}\wrt\mathcal{H}^{d-1}\right|\leq\epsilon\]
and, since $\epsilon$ was chosen arbitrarily, \[
\underset{n\rightarrow\infty}{\lim}\intop_{\partial\Omega}(g_{n}\cdot\nu_{\Omega})u^{\Omega}\wrt\mathcal{H}^{d-1}=0.\]
 Together with equation \eqref{eq:main_estimation_subdif_char} this
implies\[
\intop_{\Omega}\mathbf{1}\wrt \vDu=\underset{n\rightarrow\infty}{\lim}\intop_{\Omega}g_{n}\cdot\sigma _u\wrt \vDu.\]
Using that $\vert g_{n}(x)\vert\leq1$ for all $x\in\Omega$
and $\vert\sigma _u(x)\vert=1$, $\vDu-$a.e., we estimate $1-(g_{n}\cdot \sigma _u):$\begin{eqnarray*}
1-(g_{n}\cdot\sigma _u) & = & \frac{1}{2}\vert\sigma _u\vert^{2}-(g_{n}\cdot\sigma _u)+\frac{1}{2}\vert g_{n}\vert^{2}+\frac{1}{2}\vert\sigma _u\vert^{2}-\frac{1}{2}\vert g_{n}\vert^{2}\\
 & = & \frac{1}{2}\vert\sigma _u-g_{n}\vert^{2}+\frac{1}{2}\vert\sigma _u\vert^{2}-\frac{1}{2}\vert g_{n}\vert^{2}\\
 & \geq & \frac{1}{2}\vert\sigma _u-g_{n}\vert^{2}\quad \vDu-\mbox{a.e.}\end{eqnarray*}
Hence we have, by the Cauchy-Schwarz inequality,\begin{eqnarray*}
\underset{n\rightarrow\infty}{\limsup}\intop_{\Omega}\vert \sigma _u-g_{n}\vert\wrt \vDu & \leq & \left(\vDu (\Omega)\underset{n\rightarrow\infty}{\lim}\intop_{\Omega}\vert\sigma _u-g_{n}\vert^{2}\wrt \vDu \right)^{\frac{1}{2}}\\
 & \leq & \left(2 \vDu (\Omega)\underset{n\rightarrow\infty}{\lim}\intop_{\Omega}1-(g_{n}\cdot\sigma _u)\wrt \vDu \right)^{\frac{1}{2}}=0\end{eqnarray*}
from which the assertion follows.

In order to show the converse implication, we assume now that $u\in\BV(\Omega)$
and that there exists $g\in W_{0}^{q}(\dive;\Omega )$ with $\Vert g\Vert_{L^{\infty}}\leq1$
such that $u^{*}=-\dive g$ and $\sigma _u = Tg$ in $L^{1}(\Omega,\mathbb{R}^{d};\vDu)$.
Using Proposition \ref{prop:integral_characterization}, it is sufficient
to show that\[
\intop_{\Omega}\mathbf{1}\wrt\vDu=-\intop_{\Omega}u\dive g \wrt x.\]
Taking $(g_{n})_{n\geq0}\subset C^{\infty}(\overline{\Omega},\mathbb{R}^d)$ the
approximating sequence as in Proposition \ref{prop:w_div_approximation}, we have, analogously to the above, that
\[ \intop _{\partial \Omega } (g_n \cdot \nu _\Omega ) u^\Omega \wrt \mathcal{H}^{d-1} \rightarrow 0\] as $ n\rightarrow \infty  $ and, consequently, as $ \lim _{n\rightarrow \infty} g_n = \sigma _u $ in $ L^1(\Omega, \R ^d;\vDu) $,
\begin{eqnarray*}
\intop_{\Omega}\mathbf{1}\wrt\vDu & = & \intop_{\Omega}(\sigma _u\cdot\sigma _u)\wrt\vDu\\
 & = & \underset{n\rightarrow\infty}{\lim}\intop_{\Omega}(g_{n}\cdot\sigma _u)\wrt\vDu\\
 & = & \underset{n\rightarrow\infty}{\lim}\left(-\intop_{\Omega}\dive (g_{n})u \wrt x-\intop_{\partial\Omega}(g_{n}\cdot\nu_{\Omega})u^{\Omega}\wrt\mathcal{H}^{d-1}\right)\\
 & = & -\intop_{\Omega}\dive (g)u \wrt x.  \qedhere \end{eqnarray*}
\end{proof}
\begin{rem}
As one can see, the first two assumptions on the convergence as in Definition \ref{defn:notion_of_convergence} indeed are necessary for the techniques applied in the proof of Theorem \ref{thm:pointwise_characterization}, while the third assumption is only needed to ensure the trace operator to be the identity for continuous vector fields as in Proposition \ref{prop:basic_prop_full_trace_2}. 
\end{rem}
\begin{rem}\label{rem:ex_full_trace_condition}
Note that in the proof of Theorem \ref{thm:pointwise_characterization} we have in particular shown the following condition for existence of a trace of a $ W^q(\dive;\Omega) $ function $g$, with $ \|g \|_\infty \leq 1 $, in $ L^1(\Omega,\R ^d;\vDu) $, $ u\in L^p(\Omega)$, $ q=\frac{p}{p-1} $, $ 1<p\leq\frac{d}{d-1} $: 
\[ -\intop _\Omega u \dive g = \TV (u) \Leftrightarrow u \in \BV (\Omega), g \in D \mbox{ and } Tg = \sigma _u, \]
where $ D $ is the domain of the full trace operator $ T $ and $ \sigma _u $ is the density of $\Du$ w.r.t. $\vDu$.
\end{rem}
For the normal trace, a similar well known result follows as a direct consequence of Theorem \ref{thm:pointwise_characterization} and Proposition \ref{prop:basic_prop_full_trace_2}:
\begin{cor}
Let the assumptions of Proposition \ref{prop:integral_characterization} be satisfied. For $u\in L^p (\Omega)$ and $u^{*}\in L^q (\Omega)$
we have that $u^{*}\in\subdif\TV(u)$ if and only if 
\begin{eqnarray*}
\left\{
\begin{gathered}
u\in\BV (\Omega) \mbox{ and there exists } g\in W_{0}^{q}(\dive;\Omega) \\
\mbox{ with } \Vert g\Vert_{\infty}\leq1 \mbox{ such that }u^{*}=-\dive g \mbox{ and } \\
T_N g = Tg \cdot \sigma _u = 1 \mbox{ in } L^{1}(\Omega;\vDu).
\end{gathered}
\right.
\end{eqnarray*}
\end{cor}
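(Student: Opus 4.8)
The plan is to obtain the corollary as a purely cosmetic reformulation of Theorem \ref{thm:pointwise_characterization}, translating the full-trace condition $Tg=\sigma_u$ into the normal-trace condition $T_Ng = Tg\cdot\sigma_u = 1$. The theorem already supplies the equivalence: under the present assumptions, $u^*\in\subdif\TV(u)$ holds if and only if $u\in\BV(\Omega)$ and there is a $g\in W_0^q(\dive;\Omega)$ with $\Vert g\Vert_\infty\leq 1$, $u^*=-\dive g$ and $Tg=\sigma_u$ in $L^1(\Omega,\mathbb{R}^d;\vDu)$. Whenever this condition is met, $g$ lies in the domain $D$ of the full trace (this is implicit in $Tg$ being defined), and since $u\in L^p(\Omega)$ with $p=\frac{q}{q-1}$, Proposition \ref{prop:basic_prop_full_trace_2} applies and gives $T_Ng = Tg\cdot\sigma_u$. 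Thus the first equality in the corollary comes for free, and the only thing left to verify is that the substantive condition $Tg\cdot\sigma_u = 1$ is equivalent to $Tg=\sigma_u$.

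For that equivalence, I would argue as follows. The forward direction is immediate: if $Tg=\sigma_u$, then $Tg\cdot\sigma_u = |\sigma_u|^2 = 1$ $\vDu$-a.e., since $|\sigma_u|=1$ $\vDu$-a.e. For the converse I would reuse the pointwise estimate already appearing in the proof of Theorem \ref{thm:pointwise_characterization}. By Proposition \ref{prop:basic_prop_full_trace_1} we have $\Vert Tg\Vert_\infty\leq\Vert g\Vert_\infty\leq 1$, so $|Tg|\leq 1$ $\vDu$-a.e.; together with $|\sigma_u|=1$ this yields
\[ 1 - Tg\cdot\sigma_u = \tfrac12\vert \sigma_u - Tg\vert^2 + \tfrac12\bigl(\vert\sigma_u\vert^2 - \vert Tg\vert^2\bigr) \geq \tfrac12\vert\sigma_u - Tg\vert^2 \quad \vDu\text{-a.e.} \]
Hence $Tg\cdot\sigma_u = 1$ forces $\vert\sigma_u - Tg\vert = 0$ $\vDu$-a.e., i.e.\ $Tg=\sigma_u$, as required.

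Combining the two parts, the condition of Theorem \ref{thm:pointwise_characterization} (namely $u\in\BV(\Omega)$, existence of $g\in W_0^q(\dive;\Omega)$ with $\Vert g\Vert_\infty\leq 1$, $u^*=-\dive g$ and $Tg=\sigma_u$) is equivalent to the condition stated in the corollary (the same requirements on $u$ and $g$ together with $T_Ng = Tg\cdot\sigma_u = 1$), which is exactly the claim. I do not expect a genuine obstacle here: the entire argument is a restatement of the theorem, and the only quantitative input—the bound $\Vert Tg\Vert_\infty\leq 1$ and the resulting pointwise inequality—is already furnished by Proposition \ref{prop:basic_prop_full_trace_1}. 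If anything requires care, it is merely recording that the hypothesis $u\in L^p(\Omega)$ of Proposition \ref{prop:integral_characterization} is precisely what makes the normal trace operator defined on $D$, so that Proposition \ref{prop:basic_prop_full_trace_2} may be invoked.
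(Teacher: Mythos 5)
Your proposal is correct and follows essentially the same route the paper indicates: the paper gives no separate proof, stating only that the corollary is a direct consequence of Theorem \ref{thm:pointwise_characterization} and Proposition \ref{prop:basic_prop_full_trace_2}, which is precisely your reduction. Your polarization estimate $1 - Tg\cdot\sigma_u \geq \tfrac12\vert\sigma_u - Tg\vert^2$ (valid since $\Vert Tg\Vert_\infty \leq 1$ by Proposition \ref{prop:basic_prop_full_trace_1} and $\vert\sigma_u\vert = 1$ $\vDu$-a.e.) is a sound and natural way to fill in the one step the paper leaves implicit, namely that $Tg\cdot\sigma_u = 1$ forces $Tg = \sigma_u$.
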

At last, let us further specify the expression $ Tg = \sigma _u $. This can be done using the decomposition of $ \Du $ into an absolute continuous part with respect to the Lebesgue measure, a Cantor part and a jump part, denoted by $ \Wrt ^a u$, $\Wrt ^c u$ and $ \Wrt ^j u $, respectively \cite[Section 3.9]{Ambrosio}. The absolute continuous part can further be written as $\Wrt ^a u  = \nabla u \wrt \mathcal{L}^2 $ and the jump part as \[  \Wrt ^j u  = (u^+ (x) - u ^- (x) ) \nu _u \wrt \mathcal{H}^1 |_{S_u}\]
where $ (u^+ (x) , u^- (x) , \nu _u (x) ) $ represents uniquely, up to a change of sign, the jump at $ x\in J_u $, with $ J_u $ and $ S_u $ denoting the jump set and the discontinuity set, respectively (see \cite[Definition 3.67]{Ambrosio}). Since the measures $ \Wrt ^a u$, $\Wrt ^c u$ and $ \Wrt ^j u $ are mutually singular and $ \mathcal{H}^1 (S_u \setminus J_u) = 0 $, the following result follows from Theorem \ref{thm:pointwise_characterization} and Proposition \ref{prop:basic_prop_full_trace_1}.

\begin{prop} \label{prop:spec_trace}
Let the assumptions of Proposition \ref{prop:integral_characterization} be satisfied. For $u\in L^p (\Omega)$ and $u^{*}\in L^q (\Omega)$
we have that $u^{*}\in\subdif\TV(u)$ if and only if $u\in\BV (\Omega)$ and there exists $g\in W_{0}^{q}(\dive;\Omega)$ with $\Vert g\Vert_{\infty}\leq1$  such that $u^{*}=-\dive g$ and 
\begin{align*}
 g = \frac{\nabla u}{|\nabla u|} \quad & \mathcal{L}^d -a.e. \mbox{ on } \Omega \setminus \{x:\nabla u (x) = 0\}, \\
 Tg  = \frac{u^+ (x) - u ^- (x)}{|(u^+ (x) - u ^- (x) )|} \nu _u \quad  & \mathcal{H}^1 -a.e. \mbox{ on } S_u,\\
Tg  = \sigma _{C_u} \quad  & |\Wrt ^c u| -a.e.,
\end{align*}
where $ \sigma _{C_u} $ is the density function of $ \Wrt ^c u$ with respect to $ | \Wrt ^c u | $.
\end{prop}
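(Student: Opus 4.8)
The plan is to deduce this as a direct refinement of Theorem~\ref{thm:pointwise_characterization}. That theorem already reduces membership $u^* \in \subdif \TV(u)$ to the existence of $g \in W^q_0(\dive;\Omega)$ with $\Vert g\Vert_\infty \leq 1$, $u^* = -\dive g$, and the single identity $Tg = \sigma_u$ in $L^1(\Omega,\mathbb{R}^d;\vDu)$. The whole task is therefore to unfold this identity along the Lebesgue decomposition $\Du = \Wrt^a u + \Wrt^c u + \Wrt^j u$. Since the three parts are mutually singular, one has $\vDu = |\Wrt^a u| + |\Wrt^c u| + |\Wrt^j u|$, and an equality in $L^1(\Omega,\mathbb{R}^d;\vDu)$ holds if and only if it holds separately $|\Wrt^a u|$-a.e., $|\Wrt^c u|$-a.e.\ and $|\Wrt^j u|$-a.e. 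It then remains to insert the explicit density $\sigma_u$ on each part and to simplify $Tg$ where this is possible.

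First I would treat the absolutely continuous part, which is the only one on which the full trace collapses to $g$ itself. By Proposition~\ref{prop:basic_prop_full_trace_1} we have $Tg = g$ $|\Wrt^a u|$-a.e., and from $\Wrt^a u = \nabla u\,\wrt\mathcal{L}^d$ we read off $\sigma_u = \nabla u / |\nabla u|$ together with $|\Wrt^a u| = |\nabla u|\,\wrt\mathcal{L}^d$. Because a set is $|\Wrt^a u|$-null precisely when its intersection with $\{\nabla u \neq 0\}$ is $\mathcal{L}^d$-null, the condition $Tg = \sigma_u$ on the absolutely continuous part is exactly $g = \nabla u / |\nabla u|$ $\mathcal{L}^d$-a.e.\ on $\Omega \setminus \{x : \nabla u(x) = 0\}$, which is the first displayed identity.

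For the jump and Cantor parts no such reduction is available, so I would keep $Tg$ and merely substitute the standard densities. From $\Wrt^j u = (u^+ - u^-)\nu_u\,\wrt\mathcal{H}^{d-1}|_{S_u}$, together with $\mathcal{H}^{d-1}(S_u \setminus J_u) = 0$, one obtains $\sigma_u = (u^+ - u^-)/|u^+ - u^-|\,\nu_u$ $\mathcal{H}^{d-1}$-a.e.\ on $S_u$, giving the second identity; and by the very definition of $\sigma_{C_u}$ as the density of $\Wrt^c u$ with respect to $|\Wrt^c u|$ one has $\sigma_u = \sigma_{C_u}$ $|\Wrt^c u|$-a.e., giving the third. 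Combining the three equivalences with Theorem~\ref{thm:pointwise_characterization} yields the claim.

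I expect the only genuine subtlety to be conceptual rather than computational: one must justify that the single $L^1(\vDu)$ identity really splits into three independent pointwise conditions, which rests on the mutual singularity of $\Wrt^a u$, $\Wrt^c u$ and $\Wrt^j u$, and one must remember that the simplification $Tg = g$ is licensed only on the absolutely continuous part (Proposition~\ref{prop:basic_prop_full_trace_1}), whereas on the two singular parts the full trace cannot be replaced by the pointwise values of $g$.
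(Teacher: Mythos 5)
Your proposal is correct and follows exactly the route the paper takes: the paper states that the result ``follows from Theorem~\ref{thm:pointwise_characterization} and Proposition~\ref{prop:basic_prop_full_trace_1}'' via the mutual singularity of $\Wrt^a u$, $\Wrt^c u$, $\Wrt^j u$ and $\mathcal{H}^1(S_u\setminus J_u)=0$, which is precisely the splitting of the single identity $Tg=\sigma_u$ into three a.e.\ conditions that you carry out. Your write-up in fact supplies more detail than the paper does, and correctly isolates the one substantive ingredient, namely that $Tg$ may be replaced by $g$ only on the absolutely continuous part.
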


\section{Applications}
In this section we will present some applications where the notation of a full trace together with the subdifferential characterization of the previous section can be used to extend known results involving the subdifferential of the $ \TV $ functional. Remember that $ \Omega $ is always assumed to be a bounded Lipschitz domain. For simplicity, we now restrict ourselves to the two dimensional setting, i.e. $ \Omega \subset \R ^2 $, and use the more common notation $ H(\dive;\Omega) $ for the space $ W^2(\dive ; \Omega) $. 

As already mentioned in the introduction, the term of normal trace for $ H(\dive;\Omega) $ functions is frequently used to describe the total variational flow, i.e. the solution of the formal equation \cite{Andreu01dirichlet,Andreu01}
\[ (\mathcal{P}_F) \begin{cases} \frac{\partial u}{\partial t} = \dive \left( \frac{\Du}{\vDu} \right) & \mbox{in}  \quad (0,\infty ) \times \Omega  \\  u(0,\cdot ) = u_0(\cdot ) & \mbox{in} \quad  \Omega . \end{cases} \]
Defining the functional $ \TV :L^2 (\Omega ) \rightarrow \overline{\mathbb{R}} $, this corresponds to the evolution problem
\[ (\mathcal{P}) \begin{cases} \frac{\partial u(t)}{\partial t} +\partial \TV (u(t)) \ni 0 & \mbox{for}  \quad t \in (0,\infty )  \\  u(0) = u_0 & \mbox{in} \quad  L^2(\Omega) \end{cases} \]
which appears in the steepest descent method to minimize the $ \TV $ functional.

A solution to $ (\mathcal{P}) $ is a continuous function $ u:[0,\infty) \rightarrow  L^2(\Omega)$ with $ u(0) = u_0 $, which is absolutely continuous on $ [a,b] $ for each $ 0<a<b $, and hence differentiable almost everywhere, with $ \frac{\partial u}{\partial t}\in L^1 ((a,b),L^2(\Omega)) $ 
and $ -\frac{\partial u(t)}{\partial t}\in \partial \TV (u(t)) $ for almost every $ t\in (0,\infty) $.

Using this notation, one gets the following existence result:

\begin{prop}
Let $u_0 \in L^2(\Omega)$. Then there exists a unique solution 
to $(\mathcal{P})$.
\begin{proof}
  Using \cite[Corollary I.6.2]{Ekeland} it follows that the closure of the domain of $\partial \TV$ is already $ L^2(\Omega) $ and thus the result follows from \cite[Corollary IV.3.2]{ShowalterMOP}
\end{proof}
\end{prop}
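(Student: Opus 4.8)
The plan is to recognize $(\mathcal{P})$ as the gradient flow governed by the subdifferential of a proper, convex, lower semicontinuous functional on the Hilbert space $L^2(\Omega)$, and then to invoke the classical nonlinear semigroup theory for evolution equations of this type. The argument therefore splits into three steps: first, establish that $\TV:L^2(\Omega)\rightarrow\overline{\mathbb{R}}$ is proper, convex and lower semicontinuous with respect to $L^2$-convergence; second, deduce that $\subdif\TV$ is a maximal monotone operator whose domain is dense in $L^2(\Omega)$; third, apply the abstract existence and uniqueness result for evolution equations governed by such operators.

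For the first step, convexity and lower semicontinuity with respect to $L^1$-convergence are already supplied by the convexity and lower semicontinuity Proposition in Section \ref{sec:tools}. Since $\Omega$ is a bounded domain, the embedding $L^2(\Omega)\hookrightarrow L^1(\Omega)$ is continuous, so that $L^2$-convergence implies $L^1$-convergence; consequently $\TV$ is also lower semicontinuous with respect to $L^2$-convergence. Properness is immediate, as any constant function lies in $L^2(\Omega)$ and has vanishing total variation.

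For the second step, I would appeal to the classical fact that the subdifferential of a proper, convex, lower semicontinuous functional on a Hilbert space is a maximal monotone operator. The density of its domain follows from the general identity $\overline{\domain(\subdif\TV)}=\overline{\domain(\TV)}$, together with the observation that $\domain(\TV)\supset C^\infty_c(\Omega)$ is dense in $L^2(\Omega)$; this is precisely the content of the cited \cite[Corollary I.6.2]{Ekeland}, which yields $\overline{\domain(\subdif\TV)}=L^2(\Omega)$.

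For the third step, with $\subdif\TV$ maximal monotone and densely defined on $L^2(\Omega)$, the abstract theory of evolution equations governed by subdifferentials in \cite[Corollary IV.3.2]{ShowalterMOP} guarantees, for every initial datum $u_0$ in the closure of the domain -- here all of $L^2(\Omega)$ -- the existence of a unique solution having exactly the regularity demanded in the definition of a solution to $(\mathcal{P})$: continuity on $[0,\infty)$, local absolute continuity on $(0,\infty)$, and the differential inclusion $-\tfrac{\partial u(t)}{\partial t}\in\subdif\TV(u(t))$ holding for almost every $t$. I expect the only genuinely delicate point to be the verification of lower semicontinuity in the $L^2$-topology; everything else reduces to assembling the standard hypotheses and citing the abstract semigroup theorem.
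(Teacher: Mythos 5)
Your proposal is correct and follows exactly the same route as the paper: the paper's proof is a one-liner that cites \cite[Corollary I.6.2]{Ekeland} for the density $\overline{\domain(\subdif\TV)}=L^2(\Omega)$ and then \cite[Corollary IV.3.2]{ShowalterMOP} for existence and uniqueness of the gradient flow. Your write-up simply spells out the standard hypotheses (properness, convexity, $L^2$-lower semicontinuity via the embedding $L^2\hookrightarrow L^1$, maximal monotonicity of the subdifferential) that the paper leaves implicit.
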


Using the full trace operator $T$ and Theorem 
\ref{thm:pointwise_characterization} we can now provide an equivalent 
characterization of a solution to $(\mathcal{P})$. For the proof, we
need some properties for the solution which are stated in a lemma.

\begin{lem}
  \label{lem:prop_tv_flow}
  Consider $\partial \TV$ as a maximal monotone operator on 
  $L^2(\Omega)$ and denote by
  \[
  A_0(u) = \argmin_{v \in \partial \TV(u)} \ \|v\|_{L^2}
  \]
  the \emph{minimal section} of $\partial \TV$.
  
  If $u_0 \in \domain (\partial \TV)$, then the
  solution $u$ of ($\mathcal{P}$) satisfies:
  \begin{enumerate}[(i)]
  \item
    \label{item:tv_flow_1}
    $u: [0,\infty) \rightarrow L^2(\Omega)$ is right-differentiable
    with right-derivative $D^+ u$ solving
    \[
    D^+u(t) + A_0\bigl( u(t) \bigr) = 0 \qquad
    \text{for all} \ t \geq 0,
    \]
  \item
    \label{item:tv_flow_2}
    $A_0 \circ u: [0,\infty) \rightarrow L^2(\Omega)$,
    $(A_0 \circ u)(t) = A_0\bigl( u(t) \bigr)$ is right-continuous with
    $t \mapsto \| A_0\bigl( u(t) \bigr) \|_{L^2}$ non-increasing,
  \end{enumerate}
\end{lem}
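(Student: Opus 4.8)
The plan is to recognize the lemma as an instance of the classical regularity theory for gradient-flow evolution equations governed by the subdifferential of a convex functional on a Hilbert space, and to reduce the two assertions to a known structural theorem for such flows. The general theory of maximal monotone operators (already invoked for existence via \cite[Corollary IV.3.2]{ShowalterMOP}) only yields a.e.-differentiability of $u$; the everywhere right-differentiability, the identification of the right-derivative with the minimal section, and the monotonicity of the norm along the trajectory are finer properties that hold specifically when the monotone operator is a subdifferential and $u_0$ lies in the domain itself. So the task splits into verifying that $\partial \TV$ has this subdifferential structure and then citing the appropriate refined theorem.

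First I would confirm that $\TV : L^2(\Omega) \to \overline{\mathbb{R}}$ is proper, convex and lower semicontinuous with respect to $L^2$-convergence. Convexity is already recorded, and properness is immediate since $\TV$ vanishes on constant functions and is therefore finite somewhere. Lower semicontinuity follows from the established $L^1$-lower semicontinuity together with the continuous embedding $L^2(\Omega) \hookrightarrow L^1(\Omega)$ on the bounded domain $\Omega$: any $L^2$-convergent sequence also converges in $L^1$, so the $L^1$-lower semicontinuity transfers. Consequently $\partial \TV$ is the subdifferential of a proper convex lower semicontinuous functional on the Hilbert space $L^2(\Omega)$, which by the Rockafellar--Moreau theory is maximal monotone and, crucially, carries the additional structure needed below.

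Next, with $u_0 \in \domain(\partial \TV)$, I would invoke the refined regularity theorem for semigroups generated by subdifferentials (cf.\ \cite{ShowalterMOP}). This yields directly that the solution $u$ is right-differentiable at every $t \geq 0$, that its right-derivative satisfies $D^+u(t) = -A_0(u(t))$, the negative of the minimal-norm element of $\partial \TV(u(t))$, that $A_0 \circ u$ is right-continuous, and that $t \mapsto \|A_0(u(t))\|_{L^2}$ is non-increasing. These are precisely assertions \eqref{item:tv_flow_1} and \eqref{item:tv_flow_2}, so no further computation is required once the theorem applies.

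The main obstacle is therefore not a calculation but the correct packaging of the hypotheses: one must ensure that $\partial \TV$ is genuinely a subdifferential (not merely an abstract maximal monotone operator), since only then does the stronger regularity hold, and one must use that $u_0$ lies in $\domain(\partial \TV)$ rather than only in its $L^2$-closure, which is what distinguishes this situation from the bare existence statement. Establishing the convex-lower-semicontinuous framework in the $L^2$-topology is thus the decisive step that unlocks the desired properties of the total variation flow.
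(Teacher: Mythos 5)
Your proposal is correct and takes essentially the same route as the paper, which simply cites the refined regularity theorem for gradient flows of convex lower semicontinuous functionals (Proposition IV.3.1 in Showalter) applied to $\partial \TV$. The only difference is that you spell out the routine verification that $\TV$ is proper, convex and $L^2$-lower semicontinuous, which the paper leaves implicit.
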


\begin{proof}
  The items \ref{item:tv_flow_1} and \ref{item:tv_flow_2} follow directly
  from \cite[Proposition IV.3.1]{ShowalterMOP} applied to $\partial \TV$.
\end{proof}

The characterization of the total variation flow in terms of the
full trace then reads as follows.

\begin{prop}
A continuous function $ u:[0,\infty) \rightarrow  L^2(\Omega)$ is a 
solution to $ (\mathcal{P}) $ if and only if 
\begin{enumerate}[(i)]
\item \label{item:tv_flow_char_1}$u$ is  absolutely continuous 
  on $ [a,b] $ for each $ 0<a<b $ with derivative
  $ \frac{\partial u}{\partial t}\in L^1 ((a,b);L^2(\Omega)) $,
\item \label{item:tv_flow_char_2}
  $ u(t) \in \BV(\Omega) $ for each $t > 0$, $ u(0) = u_0 $,
\item \label{item:tv_flow_char_3}
  there exists
  $ g\in L^\infty ( (0,\infty)\times \Omega, \R^d) $ with 
  $ \| g\| _\infty \leq 1 $ and
\item \label{item:tv_flow_char_4}
  $ g: (0,\infty) \rightarrow H_0(\dive;\Omega)$ is measurable
  with
  $ \frac{\partial u(t)}{\partial t} = \dive g(t) $ as well as
  \[
  Tg(t) = \sigma _u (t) \quad \mbox{in} 
  \quad L^1(\Omega, \mathbb{R}^2;|\Du (t) |) 
  \]
  for almost every $ t\in (0,\infty) $.
\end{enumerate}
\end{prop}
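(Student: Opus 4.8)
The plan is to read this statement as a measurable, pointwise-in-$t$ lift of Theorem~\ref{thm:pointwise_characterization} along the trajectory $t \mapsto u(t)$, specialized to $d = 2$ and $p = q = 2$, so that the hypotheses of Proposition~\ref{prop:integral_characterization} are met. The reverse implication will be essentially immediate, whereas the forward implication will require producing a \emph{jointly measurable} flux $g$, which I expect to be the only genuine difficulty.

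For the reverse direction, assume (i)--(iv). For almost every $t \in (0,\infty)$ the field $g(t)$ lies in $W_0^2(\dive;\Omega) = H_0(\dive;\Omega)$, satisfies $\|g(t)\|_\infty \leq 1$, and fulfils $-\dive g(t) = -\frac{\partial u(t)}{\partial t}$ together with $T g(t) = \sigma_{u(t)}$. Since $u(t) \in \BV(\Omega)$ by (ii), Theorem~\ref{thm:pointwise_characterization} applies verbatim and yields $-\frac{\partial u(t)}{\partial t} \in \subdif\TV(u(t))$ for almost every $t$. Together with the regularity recorded in (i) and the initial condition $u(0) = u_0$ from (ii), this is precisely the definition of a solution to $(\mathcal{P})$.

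For the forward direction I would first extract (i) and (ii) directly from the notion of solution: absolute continuity with $\frac{\partial u}{\partial t} \in L^1((a,b);L^2(\Omega))$ is built into the definition, and $u(t) \in \domain(\subdif\TV) \subset \BV(\Omega)$ for every $t > 0$ follows from the regularizing property of the flow (cf.\ Lemma~\ref{lem:prop_tv_flow}), giving (ii). Since $-\frac{\partial u(t)}{\partial t} \in \subdif\TV(u(t))$ for almost every $t$, Theorem~\ref{thm:pointwise_characterization} guarantees, for each such $t$, at least one admissible field. The key simplification is that, once a field $g$ with $g \in H_0(\dive;\Omega)$, $\|g\|_\infty \leq 1$ and $\dive g = \frac{\partial u(t)}{\partial t}$ is given, the trace identity $T g = \sigma_{u(t)}$ comes \emph{for free}: by Proposition~\ref{prop:integral_characterization} one has $-\int_\Omega u(t)\,\frac{\partial u(t)}{\partial t} = \TV(u(t))$, hence $-\int_\Omega u(t)\dive g = \TV(u(t))$, and Remark~\ref{rem:ex_full_trace_condition} then forces $g \in D$ with $T g = \sigma_{u(t)}$. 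Thus the whole problem reduces to selecting, measurably in $t$, one field from
\[
\Gamma(t) = \Bigl\{ g \in H_0(\dive;\Omega) \ \Big\vert\ \|g\|_\infty \leq 1, \ \dive g = \tfrac{\partial u(t)}{\partial t} \Bigr\}.
\]

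The main obstacle is exactly this measurable selection. I would observe that $\Gamma(t)$ is, for almost every $t$, a nonempty, convex, closed and (since $\Omega$ is bounded and $\|g\|_\infty \leq 1$) norm-bounded subset of $L^2(\Omega,\R^2)$, and that its dependence on $t$ enters only through the Bochner-measurable map $t \mapsto \frac{\partial u(t)}{\partial t} \in L^2(\Omega)$. Choosing $g(t)$ to be the unique element of minimal $L^2$-norm, i.e.\ the projection of $0$ onto $\Gamma(t)$, yields a single-valued selection, and I would establish its strong measurability by verifying that $t \mapsto \Gamma(t)$ is a measurable closed-convex-valued multifunction — the affine divergence constraint and the $L^\infty$-ball being closed, with measurable $t$-dependence through $\frac{\partial u}{\partial t}$ — and invoking a standard measurable-selection/measurable-projection theorem. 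Finally, the pointwise bound $\|g(t)\|_\infty \leq 1$ upgrades to $g \in L^\infty((0,\infty)\times\Omega,\R^2)$ with $\|g\|_\infty \leq 1$, giving (iii), while measurability of $t \mapsto g(t) \in H_0(\dive;\Omega)$ together with $\dive g(t) = \frac{\partial u(t)}{\partial t}$ and the automatic trace identity gives (iv), completing the proof.
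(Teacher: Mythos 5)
Your reverse implication and your reduction of the forward implication are both sound, and the reduction is a genuinely nice observation: since $-\int_\Omega u(t)\dive g = \langle u(t),\tfrac{\partial u}{\partial t}(t)\rangle$ depends only on $\dive g$ and not on the particular $g$, the integral identity of Proposition \ref{prop:integral_characterization} — and hence, via Remark \ref{rem:ex_full_trace_condition}, the full-trace identity $Tg(t)=\sigma_{u(t)}$ — holds automatically for \emph{every} $g\in H_0(\dive;\Omega)$ with $\|g\|_\infty\le 1$ and $\dive g=\tfrac{\partial u}{\partial t}(t)$. This correctly isolates the only real difficulty, namely producing a measurable $t\mapsto g(t)$. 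Your route from there, however, diverges from the paper's and leaves its crux unproved. You assert that $t\mapsto\Gamma(t)$ is a measurable closed-convex-valued multifunction "because the constraints are closed and depend measurably on $t$", but measurability of a multifunction (in the sense required by Kuratowski--Ryll-Nardzewski or Castaing, e.g.\ that $\{t:\Gamma(t)\cap U\neq\emptyset\}$ is measurable for open $U$) does not follow from closedness of the individual constraint sets; for $\Gamma(t)=K\cap\dive^{-1}(\{w(t)\})$ with $K=\{g\in H_0(\dive;\Omega):\|g\|_\infty\le 1\}$ one has to argue, for instance, via weak compactness of the bounded slices of $K$ and weak lower semicontinuity of the value function $w\mapsto\inf\{\|g\|_{L^2}:g\in K,\ \dive g=w\}$, or via an Aumann-type selection through the (Borel) graph. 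None of this is fatal — I believe the argument can be completed — but as written the measurable selection is exactly the step that is claimed rather than proved. A smaller loose end: condition (ii) for $t>0$ with general $u_0\in L^2(\Omega)$ does not follow from Lemma \ref{lem:prop_tv_flow} as stated (which assumes $u_0\in\domain(\subdif\TV)$); one needs either the regularizing effect of subgradient flows or the translation argument the paper uses.

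The paper avoids abstract selection theorems entirely by exploiting the semigroup structure: after reducing to $u_0\in\domain(\subdif\TV)$ by translation, it uses the right-continuity and monotonicity of the minimal section $t\mapsto A_0(u(t))$ (Lemma \ref{lem:prop_tv_flow}) to approximate $\tfrac{\partial u}{\partial t}$ by piecewise constant functions $(u^\varepsilon)'$, chooses a flux via Proposition \ref{prop:integral_characterization} on each of the finitely many intervals (so measurability is trivial), and obtains $g$ as a weak limit in $L^2((0,L);H_0(\dive;\Omega))$; the trace identity is then recovered a.e.\ in $t$ by the same "for free" mechanism you use, via Remark \ref{rem:triv_subdif_inequ} and Remark \ref{rem:ex_full_trace_condition}. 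Your approach would buy a statement independent of the semigroup machinery (it only uses that $-\tfrac{\partial u}{\partial t}(t)\in\subdif\TV(u(t))$ a.e.), at the price of a nontrivial measurable-selection argument that you still owe.
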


\begin{proof}
  First note that without loss of generality, we can assume that
  $u_0 \in \domain(\partial \TV)$: From
  \cite[Proposition IV.3.2]{ShowalterMOP} follows that for each 
  $t_0 > 0$, the translated solution
  $t \mapsto u(t + t_0)$ solves ($\mathcal{P}$) with initial
  value $u(t_0) \in \domain(\partial \TV)$. Consequently,
  if the claimed statements are true on each $[t_0,\infty)$, then also 
  on $(0,\infty)$.

  Choose $L > 0$. We will now approximate $u$ on $[0,L)$ 
  as well as $\frac{\partial u}{\partial t}$
  by piecewise constant functions as follows. 
  Denote by $0 = t_0 < t_1 < \ldots < t_K = L$ a partition 
  of $[0,L)$. For $t \in [0,L)$ denote by $k(t) = \min
  \ \{k': t_{k'} > t\}$ as well as $\tau(t) = t_{k(t)} - t_{k(t)-1}$. 
  For each $\varepsilon > 0$ we can
  now choose, due to the uniform continuity of $u$ on $[0,L]$, 
  a partition which satisfies
  \[
  \|u(t) - u(t_{k(t)})\|_{L^2} < \varepsilon.
  \]
  for all $t \in [0,L)$. It is moreover possible to achieve that these
  partitions are nested which implies that $t_{k(t)} \rightarrow t$,
  $\tau(t) \rightarrow 0$ as $\varepsilon \rightarrow 0$, both
  monotonically decreasing. Then, the function
  \[
  u^\varepsilon: [0,L) \rightarrow L^2(\Omega), 
  \qquad u^\varepsilon(t) = u(t_{k(t)})
  \]
  obviously converges to $u$ in $L^\infty((0,L), L^2(\Omega))$.
  Likewise, the function
  \[
  (u^\varepsilon)': [0,L) \rightarrow L^2(\Omega), \qquad
  (u^\varepsilon)'(t) = -A_0\bigl( u(t_{k(t)})\bigr)
  \]
  satisfies, on the one hand,
  $-(u^\varepsilon)'(t) \in \partial \TV\bigl( u^\varepsilon(t) \bigr)$
  for $t \in [0,L)$ by definition of $A_0$, see 
  Lemma~\ref{lem:prop_tv_flow}. 
  On the other hand, for $t \in [0,L)$, we have
  $t_{k(t)} \rightarrow t$ monotonically
  decreasing, which implies by the right continuity of $t \mapsto 
  A_0\bigl( u(t) \bigr)$, see Lemma~\ref{lem:prop_tv_flow}, that
  \[
  \lim_{\varepsilon \rightarrow 0} (u^\varepsilon)'(t) 
    = -A_0\bigl( u(t) \bigr) \qquad \text{in} \ L^2(\Omega).
  \]
  Also $\|(u^\varepsilon)'(t)\|_2 \leq \|A_0(u_0)\|_2$, again by 
  Lemma~\ref{lem:prop_tv_flow}, so there exists an integrable majorant
  and by Lebesgue's theorem, $\lim_{\varepsilon \rightarrow 0} (u^\varepsilon)' 
  = -A_0 \circ u$ in $L^2((0,L), L^2(\Omega))$. 
  However, Lemma~\ref{lem:prop_tv_flow} yields $-A_0 \circ u = D^+ u$,
  so $(u^\varepsilon)'$ is indeed approximating 
  $\frac{\partial u}{\partial t}$.

  As each $u^\varepsilon$, $(u^\varepsilon)'$ is constant 
  on the finitely
  many intervals $[t_{k(t)-1}, t_{k(t)})$ and 
  $-(u^\varepsilon)'(t) \in \partial \TV\bigl(
  u^\varepsilon(t) \bigr)$, we can 
  choose a vector field $g$ according 
  to Proposition~\ref{prop:integral_characterization}
  on each of these intervals.
  Composing these $g$ yields a measurable 
  $g^\varepsilon \in L^2((0,L); H_0(\dive, \Omega))$,
  $\|g^\varepsilon\|_\infty \leq 1$ in $L^\infty((0,L) \times \Omega, \R^d)$ 
  and such that $(u^\varepsilon)' 
  = \dive g^\varepsilon$ in the weak sense. Moreover,
  \begin{equation}
    \label{eq:flow_approx}
    \int_0^L \int_\Omega \ones \wrt{|\Wrt u^\varepsilon(t)|} \wrt t= 
    - \int_0^L \int_\Omega u^\varepsilon \dive g^\varepsilon \wrt x \wrt t.
  \end{equation}
  Now, $\{g^\varepsilon\}$ is bounded in $L^2((0,L), H_0(\dive, \Omega))$,
  hence there exists a weakly convergent subsequence (not relabeled) and
  a limit $g$ with $\|g\|_\infty \leq 1$ in $L^\infty((0,L) \times \Omega, \R^d)$. 
  In particular, as $(u^\varepsilon)' = \dive g^\varepsilon$,
  we have $\dive g^\varepsilon \rightarrow \frac{\partial u}{\partial t}$
  in $L^2((0,L), L^2(\Omega))$. By weak closedness of the divergence operator,
  also $\dive g = \frac{\partial u}{\partial t}$.

  Finally, taking the limits in~\eqref{eq:flow_approx} yields
  \[
  \int_0^L \int_\Omega \ones \wrt|\Wrt u| \wrt t \leq 
  \liminf_{\varepsilon \rightarrow 0} \int_0^L \int_\Omega
  \ones \wrt|\Wrt u^\varepsilon| \wrt t = 
  - \int_0^L \int_\Omega u \dive g \wrt x \wrt t. 
  \]
  On the other hand, as for almost every $t \in (0,L)$, 
  $g \in H_0(\dive; \Omega)$ and $\|g(t)\|_\infty \leq 1$, according to Remark \ref{rem:triv_subdif_inequ}
  it follows that $-\int_\Omega u(t) \dive g(t) \leq \TV\bigl(u(t)\bigr)$.
  Hence, the above is only possible if $- \int_\Omega u(t) \dive g(t) 
  = \TV\bigl(u(t) \bigr)$ for almost every $t \in (0,L)$.
  By Remark \ref{rem:ex_full_trace_condition}, a full trace then
  exists, i.e.
  \[
  Tg(t) = \sigma_{u}(t) \quad \text{in} \quad L^1(\Omega, \R^d; 
  |\Wrt u(t)|) \qquad \text{for a.e.} \ t \in (0,L).
  \]
  Conversely, if we now assume that $ u:[0,\infty )\rightarrow L^2(\Omega) $ satisfies \ref{item:tv_flow_char_1} - \ref{item:tv_flow_char_4}, in order to establish that $ u $ is a solution to $ (\mathcal{P}) $ it is left to show that $ -\frac{\partial u(t)}{\partial t} \in \subdif \TV (u(t)) $ for almost every $ t\in (0,\infty) $. But since at almost every $ t\in (0,\infty) $ we have, for $g\in L^\infty ( (0,\infty)\times \Omega, \R^d) $ as in \ref{item:tv_flow_char_3}, that $ g(t)\in H_0(\dive;\Omega) $, $ \|g(t) \|_\infty\leq 1  $, $\frac{\partial u(t)}{\partial t} = \dive g(t)$ and $ Tg(t) = \sigma _u (t) $,  this follows as immediate consequence of Theorem~\ref{thm:pointwise_characterization}.
\end{proof}

In a related context, a Cheeger set \cite{Cheeger70,Kawohl06} of a bounded set $ G $ of finite perimeter \cite[Section 3.3]{Ambrosio} is defined to be the minimizer of
\begin{equation} \underset{A\subset \overline{G}}{\min} \frac{|\partial A|}{|A|} .\end{equation}
Defining the constant
\[\lambda _G = \frac{|\partial G |}{| G |} ,\]
a sufficient condition for $ G $ to be a Cheeger set of itself, or in other words to be calibrable, is that $ v:= \chi _G $  satisfies the equation \cite[Lemma 3]{Bellettini02}
\begin{equation} \label{eq:calibrable} - \dive (\sigma _v ) = \lambda _G v \quad \mbox{ on } \mathbb{R}^2,\end{equation}
i.e. there exists a vector field $ \xi \in L^\infty (\mathbb{R}^2; \mathbb{R}^2) $ such that $ \| \xi \| _\infty \leq 1 $,
\[-\dive \xi = \lambda _G v \quad \mbox{ on } \Rtwo \]
and
\[ \intop _{\mathbb{R}^2} \theta(\xi, \Wrt v)\wrt |\Wrt v| = \intop _{\mathbb{R}^2} \mathbf{1}\wrt |\Wrt v| .\]
This condition is further equivalent to \cite[Theorem 4]{Bellettini02}:
\begin{enumerate}
\item $G $ is convex.
\item $\partial G$ is of class $ C^{1,1} $.
\item It holds
\[\underset{p}{\mbox{ess}\sup}\,\kappa _{\partial G} (p) \leq \frac{P(G)}{|G|}, \]
\end{enumerate}
where $ \kappa _{\partial G} $ is the curvature of $ \partial G $.
Using the full trace operator, we can provide the following sufficient condition for $ G $ being calibrable:
\begin{prop}
Let $ G \subset \mathbb{R}^2 $ be a bounded set of finite perimeter. Then $ v= \chi _G \in \BV (\Rtwo) $ satisfies condition \eqref{eq:calibrable} if there exists a bounded Lipschitz domain $ K $ such that $ \overline{G}\subset K $ and $ \xi \in H_0  (\dive;K) $ with $ \| \xi \| _\infty \leq 1 $ and $ \xi \in D $, where $ D $ is the domain of the full trace operator, such that
\[ -\dive \xi = \lambda _G v \quad \mbox{ on }K\]
and 
\[ T\xi = \nu _G \quad  \mathcal{H}^1 - \mbox{ almost everywhere on } \mathcal{F} G ,\]
where $ \mathcal{F}G $ is the reduced boundary, i.e. the set of all points $ x\in \supp |\Wrt \chi _G | $ such that the limit
\[ \nu _G (x) := \underset{\rho \rightarrow 0^+}{\lim}  \frac{\Wrt \chi _G (B_\rho (x) )}{|\Wrt \chi _G (B_\rho (x)) |}\]
exists.
\end{prop}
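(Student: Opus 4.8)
The plan is to verify the classical calibrability condition \eqref{eq:calibrable} on $\Rtwo$ by converting the full-trace hypothesis on $K$ into a normal-trace statement and then extending the field by zero. First I would record the measure-theoretic structure of $v=\chi_G$: by De Giorgi's structure theorem for sets of finite perimeter (see \cite{Ambrosio}), $\Wrt v = \nu_G\,\mathcal{H}^1|_{\mathcal{F}G}$ and $|\Wrt v| = \mathcal{H}^1|_{\mathcal{F}G}$, so that the density $\sigma_v$ of $\Wrt v$ with respect to $|\Wrt v|$ is exactly $\nu_G$, matching the field appearing in the statement. Moreover, since $G$ is bounded, $\mathcal{F}G\subset \overline{G}$ is a compact subset of the open set $K$. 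In particular $v\in\BV(K)\cap L^2(K)$ because $\overline G\subset K$ and $K$ is bounded, so all constructions of Section \ref{tv_subdif} apply on $\Omega=K$ with $q=2$, $p=2$, and the hypothesis $T\xi=\nu_G$ $\mathcal{H}^1$-a.e.\ on $\mathcal{F}G$ reads $T\xi=\sigma_v$ in $L^1(K,\Rtwo;|\Wrt v|)$.

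Next, since $\xi\in D$ with $\|\xi\|_\infty\le 1$ and $T\xi=\sigma_v$, I would apply Proposition \ref{prop:basic_prop_full_trace_2} with $u=v$ to obtain that the normal trace satisfies $T_N\xi = T\xi\cdot\sigma_v = \nu_G\cdot\nu_G = 1$ $|\Wrt v|$-a.e., using $|\sigma_v|=1$ $|\Wrt v|$-a.e. Equivalently, $\theta(\xi,\Wrt v)=1$ $|\Wrt v|$-a.e.\ on $K$. It then remains to pass from $K$ to $\Rtwo$. I would extend $\xi$ by zero to a field $\tilde\xi:\Rtwo\to\Rtwo$. Using Remark \ref{rem:gauss_green_w0_div} for $H_0(\dive;K)$, for every $\phi\in C_c^\infty(\Rtwo)$ one has $\int_{\Rtwo}\tilde\xi\cdot\nabla\phi = \int_K\xi\cdot\nabla\phi = -\int_K(\dive\xi)\phi$, so $\dive\tilde\xi$ is the zero extension of $\dive\xi$. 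Since $v=\chi_G$ vanishes outside $\overline G\subset K$, this yields $-\dive\tilde\xi = \lambda_G v$ on all of $\Rtwo$, while $\|\tilde\xi\|_\infty=\|\xi\|_\infty\le 1$.

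Finally, because the defining identity of the Anzellotti trace tested against $\psi\in C_c^\infty(K)$ involves only the values of the field on $K$, and because $|\Wrt v|$ is concentrated on $\mathcal{F}G$, which lies in the interior of $K$, the normal trace of $\tilde\xi$ relative to $\Wrt v$ on $\Rtwo$ agrees $|\Wrt v|$-a.e.\ with $T_N\xi=\theta(\xi,\Wrt v)$ computed on $K$. Hence $\theta(\tilde\xi,\Wrt v)=1$ $|\Wrt v|$-a.e., so that $\int_{\Rtwo}\theta(\tilde\xi,\Wrt v)\wrt|\Wrt v| = \int_{\Rtwo}\mathbf{1}\wrt|\Wrt v|$; together with $-\dive\tilde\xi=\lambda_G v$ and $\|\tilde\xi\|_\infty\le 1$ this is precisely condition \eqref{eq:calibrable}, with the witnessing field $\xi$ taken to be $\tilde\xi$.

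I expect the last step to be the main obstacle: rigorously justifying that the normal trace is unchanged under zero extension and that the localized trace computed on $K$ may be read as the global Anzellotti trace on $\Rtwo$ entering \eqref{eq:calibrable}. This rests on the compact containment $\mathcal{F}G\subset\overline G\subset K$, which confines all the mass of $|\Wrt v|$ to the interior of $K$ and renders any boundary contribution on $\partial K$ irrelevant. The remaining identifications, namely $\sigma_v=\nu_G$, $|\sigma_v|=1$ $|\Wrt v|$-a.e., and $v\in\BV(K)\cap L^2(K)$, are routine.
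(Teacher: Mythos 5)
Your proposal is correct and follows essentially the same route as the paper: identify $\sigma_v=\nu_G$ via the structure of sets of finite perimeter, convert the hypothesis $T\xi=\nu_G$ into the normal-trace identity $\theta(\xi,\Wrt v)=T\xi\cdot\sigma_v=1$ using Proposition \ref{prop:basic_prop_full_trace_2}, and extend $\xi$ by zero using $\xi\in H_0(\dive;K)$. The paper states this very tersely, while you additionally spell out the zero-extension and the localization of the Anzellotti pairing to $K$ (justified by $\supp|\Wrt v|\subset\overline{G}\subset K$), which is a welcome elaboration of a step the paper leaves implicit.
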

\begin{proof}
The proof is straightforward: Using that $ |\Wrt \chi _G | = \mathcal{H}^1|_{\mathcal{F}G } $ and that $ \Wrt \chi _G = \nu _G |\Wrt \chi _G | $ \cite[Section 3.5]{Ambrosio} it follows that 
\[\intop _K |\Wrt v| = \intop _K T\xi \cdot \nu _G  \wrt |\Wrt v| = \intop _K \theta (\xi,\Wrt v)\wrt |\Wrt v |. \] From this and the fact that $ \xi \in H_0  (\dive ;K) $ it follows that its extension by $ 0 $ to the whole $ \mathbb{R}^2 $ is contained in $ H(G ;\Rtwo) $ and satisfies condition \eqref{eq:calibrable}.
\end{proof}

The full trace operator can also be used to formulate optimality conditions for optimization problems appearing in mathematical imaging. A typical problem formulation would be
\begin{equation} \min\limits_{u\in L^2(\Omega)} \TV (u)+F (u), \label{eq:imaging_prob} \end{equation}
where $ \TV $ plays the role of a regularization term and $ F:L^2 (\Omega) \rightarrow \overline{\mathbb{R}} $ reflects data fidelity. Under weak assumptions on $ F $ we can derive the following general optimality condition:

\begin{prop}\label{prop:imaging_prob}
Suppose that $ F:L^2(\Omega) \rightarrow \overline{\mathbb{R}} $ is such that $ \partial (\TV + F ) = \partial \TV + \partial F $. Then we have that $ u\in L^2(\Omega) $ solves \eqref{eq:imaging_prob} if and only if there exists $ g \in H_0(\dive ;\Omega) $ such that $ \| g\| _\infty \leq 1 $,
\[\dive g \in \partial F (u) \]
and
\[Tg = \sigma _u  \quad \mbox{in}\quad L^1 (\Omega ,\mathbb{R}^2 ;\vDu )\]

\end{prop}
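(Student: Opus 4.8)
The plan is to combine the elementary convex-analytic characterization of minimizers with the pointwise subdifferential characterization of Theorem~\ref{thm:pointwise_characterization}, using the hypothesized sum rule to split the subdifferential of the objective. First I would recall that, directly from Definition~\ref{defn:polar}, $u$ minimizes $\TV + F$ if and only if $0 \in \partial(\TV + F)(u)$: testing the subgradient inequality with the subgradient $0$ yields exactly $(\TV+F)(u) \leq (\TV+F)(v)$ for all $v \in L^2(\Omega)$. By the standing assumption $\partial(\TV+F) = \partial\TV + \partial F$, this is in turn equivalent to $0 \in \partial\TV(u) + \partial F(u)$, i.e.\ to the existence of some $u^* \in \partial\TV(u)$ with $-u^* \in \partial F(u)$.

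For the forward implication I would then apply Theorem~\ref{thm:pointwise_characterization} to $u^* \in \partial\TV(u)$. Since we are in the two-dimensional setting, the hypotheses of Proposition~\ref{prop:integral_characterization} are met with $d = 2$ and $p = q = 2$ (so that $d/(d-1) = 2$ and $H_0(\dive;\Omega) = W_0^2(\dive;\Omega)$), while $u \in L^2(\Omega)$ and $u^* \in L^2(\Omega)$ are exactly the required integrability conditions. The theorem then furnishes $u \in \BV(\Omega)$ and a field $g \in H_0(\dive;\Omega)$ with $\|g\|_\infty \leq 1$, $u^* = -\dive g$ and $Tg = \sigma_u$ in $L^1(\Omega,\Rtwo;\vDu)$. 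Substituting $u^* = -\dive g$ into $-u^* \in \partial F(u)$ gives $\dive g \in \partial F(u)$, which is the remaining claimed condition.

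The converse simply reverses this chain: given $g \in H_0(\dive;\Omega)$ with $\|g\|_\infty \leq 1$, $\dive g \in \partial F(u)$ and $Tg = \sigma_u$ (note that the last condition presupposes $u \in \BV(\Omega)$, since $\sigma_u$ is the density of $\Du$ with respect to $\vDu$), the ``if'' part of Theorem~\ref{thm:pointwise_characterization} yields $-\dive g \in \partial\TV(u)$, while $\dive g \in \partial F(u)$ holds by hypothesis. Adding these gives $0 = (-\dive g) + \dive g \in \partial\TV(u) + \partial F(u) = \partial(\TV+F)(u)$, so that $u$ is a minimizer by Fermat's rule once more.

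I expect no serious obstacle in this argument, precisely because the genuinely delicate point, namely the validity of the sum rule $\partial(\TV+F) = \partial\TV + \partial F$ (which in general requires a constraint qualification on $F$, such as continuity at a point of the effective domain of $\TV$ or an Attouch--Br\'ezis-type condition), is hypothesized outright. The only care needed is the bookkeeping of signs when passing between $u^* = -\dive g$ and $-u^* = \dive g \in \partial F(u)$, and the verification that the two-dimensional, $L^2$ regime does fall under the assumptions of Proposition~\ref{prop:integral_characterization}; both are routine.
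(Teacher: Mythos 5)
Your proposal is correct and is exactly the argument the paper intends: the paper's own proof is the one-line remark that the claim ``follows immediately from $\partial(\TV+F)=\partial\TV+\partial F$ and the characterization of $\partial\TV$ in Theorem~\ref{thm:pointwise_characterization}'', and you have simply written out that chain (Fermat's rule, the sum rule, then the pointwise characterization with $d=2$, $p=q=2$) in full detail, including the sign bookkeeping. No discrepancy to report.
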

\begin{proof}
This follows immediately from $ \partial (\TV + F) = \partial \TV + \partial F $ and the characterization of $ \partial \TV $ in Theorem \ref{thm:pointwise_characterization}.
\end{proof}
In \cite{Vese01}, a problem of this type, but with a generalized regularization term was considered. Existence and a characterization of solutions to
\[\min _{u\in \BV}  \intop _\Omega \varphi (\vDu ) + \intop _\Omega | Ku - u_0 |^2 \] was shown, a problem which appears in denoising, deblurring or zooming of digital images.. For the characterization of optimal solutions, again the term $ g \cdot \sigma _u $, with $ g\in H(\dive ;\Omega) $, was associated to a measure and then, following \cite{Demengel84}, it was split into a measure corresponding the absolute continuous part of $ \Du $ with respect to the Lebesgue measure and a singular part. By applying Propositions \ref{prop:spec_trace} and \ref{prop:imaging_prob}, we can now get a characterization of solutions similar to \cite[Propostion 4.1]{Vese01}, but in terms of $ L^1 (\Omega ,\mathbb{R}^2; \vDu) $ functions, for the special case that $ \varphi $ is the identity:
\begin{prop}
Let $ u_0 \in L^2 (\Omega) $ and $ K:L^2(\Omega ) \rightarrow L^2 (\Omega) $ a continuous, linear operator. Then, $ u\in L^2(\Omega) $ is a solution to
\[\min _{u\in \BV}  \intop _\Omega \vDu + \intop _\Omega | Ku - u_0 |^2 \]
if and only if $ u\in \BV(\Omega ) $ and there exists $ g \in H_0(\dive ;\Omega )$ with $ \|g \|_\infty \leq 1 $ such that
\[ 2K^* (Ku - u_0) = \dive g \]
and
\begin{align*}
 g = \frac{\nabla u}{|\nabla u|} \quad & \mathcal{L}^2 -a.e. \mbox{ on } \Omega \setminus \{x:\nabla u (x) = 0\} \\
 Tg  = \frac{u^+ (x) - u ^- (x)}{|(u^+ (x) - u ^- (x) )|} \nu _u \quad  & \mathcal{H}^1 -a.e. \mbox{ on } S_u\\
Tg  = \sigma _{C_u} \quad  & |\Wrt ^c u| -a.e.,
\end{align*}
where $ u^+,u^-, \nu _u , S_u, C_u, \nabla u $ and $ |\Wrt ^c u| $ are defined as in Proposition \ref{prop:spec_trace} and its preceding paragraph.
\begin{proof}
By continuity of $ F(u) = \intop _\Omega |Ku - u_0 |^2 $ it follows that $ \partial (\TV + F) = \partial \TV + \partial F $ and we can apply Proposition \ref{prop:imaging_prob}. The characterization follows then by Proposition \ref{prop:spec_trace} and the fact that $ \partial F(v) = \{ 2K^* (Ku - u_0) \} $ for any $ v\in L^2(\Omega) $.
\end{proof}
\end{prop}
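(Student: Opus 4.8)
The plan is to recognize this problem as an instance of the general imaging model $\TV + F$ with $F(u) = \intop_\Omega |Ku - u_0|^2$, so that Proposition~\ref{prop:imaging_prob} together with the pointwise decomposition of Proposition~\ref{prop:spec_trace} will supply essentially the entire argument. The only genuine work is to check that this particular $F$ satisfies the hypotheses of Proposition~\ref{prop:imaging_prob}: that the subdifferential sum rule $\partial(\TV + F) = \partial\TV + \partial F$ holds, and that $\partial F$ has the stated single-valued form.

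First I would verify that $F$ is finite, convex and continuous on all of $L^2(\Omega)$. Writing $F(u) = \|Ku - u_0\|_{L^2}^2$, it is the composition of the continuous affine map $u \mapsto Ku - u_0$ (continuous by boundedness of $K$) with the continuous convex squared norm, hence finite and continuous everywhere, in particular at every point of $\domain(\partial\TV)$. The Moreau--Rockafellar sum rule then yields $\partial(\TV + F) = \partial\TV + \partial F$, which is exactly the structural assumption required by Proposition~\ref{prop:imaging_prob}. Next I would compute $\partial F$ by expanding, for $v \in L^2(\Omega)$,
\[ F(u + tv) = \|Ku - u_0\|_{L^2}^2 + 2t\,\scp{K^*(Ku - u_0)}{v}_{L^2} + t^2 \|Kv\|_{L^2}^2, \]
which shows that $F$ is Fréchet differentiable with gradient $2K^*(Ku - u_0)$; being convex and differentiable, $\partial F(u) = \{2K^*(Ku - u_0)\}$ is the claimed singleton.

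With these two facts established, Proposition~\ref{prop:imaging_prob} applies directly: $u$ is optimal if and only if there is $g \in H_0(\dive;\Omega)$ with $\|g\|_\infty \leq 1$ such that $\dive g \in \partial F(u)$ and $Tg = \sigma_u$ in $L^1(\Omega, \Rtwo; \vDu)$. Since $\partial F(u)$ is the singleton $\{2K^*(Ku - u_0)\}$, the inclusion $\dive g \in \partial F(u)$ is precisely the equation $2K^*(Ku - u_0) = \dive g$. It then remains only to unfold the single identity $Tg = \sigma_u$ into its three pointwise components, and this is exactly the content of Proposition~\ref{prop:spec_trace}: decomposing $\Du = \Wrt^a u + \Wrt^c u + \Wrt^j u$, using $\Wrt^a u = \nabla u\,\dd\mathcal{L}^2$ together with the jump representation of $\Wrt^j u$, and recalling from Proposition~\ref{prop:basic_prop_full_trace_1} that $Tg = g$ holds $|\Wrt^a u|$-a.e., the equality $Tg = \sigma_u$ splits precisely into $g = \nabla u/|\nabla u|$ on $\Omega \setminus \{\nabla u = 0\}$, into $Tg = (u^+ - u^-)\nu_u/|u^+ - u^-|$ on $S_u$, and into $Tg = \sigma_{C_u}$ on the Cantor part.

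I do not anticipate a serious obstacle, since all the analytic substance resides in the earlier propositions. The subtlest point is the justification of the sum rule: one must cite the correct convex-analytic theorem and verify its continuity hypothesis for $F$, which is however immediate from the boundedness of $K$. Everything beyond that is a routine transfer between Proposition~\ref{prop:imaging_prob} and Proposition~\ref{prop:spec_trace}.
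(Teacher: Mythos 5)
Your proposal is correct and follows essentially the same route as the paper: continuity of $F$ on $L^2(\Omega)$ gives the sum rule $\partial(\TV+F)=\partial\TV+\partial F$, Proposition~\ref{prop:imaging_prob} then applies, $\partial F(u)=\{2K^*(Ku-u_0)\}$ reduces the divergence inclusion to an equation, and Proposition~\ref{prop:spec_trace} unfolds $Tg=\sigma_u$ into the three pointwise conditions. You merely spell out the Fr\'echet-differentiability computation and the Moreau--Rockafellar citation that the paper leaves implicit.
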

The general formulation of an imaging problem as in \eqref{eq:imaging_prob} also applies, for example, to the minimization problem presented in \cite{Holler12}: There, as part of an infinite dimensional modeling of an improved JPEG reconstruction process, one solves
\begin{equation} \min\limits_{u\in L^2(\Omega)} \TV (u)+\mathcal{I}_U (u) \label{eq:reconst_prop} \end{equation}
where $ U=\{u\in L^2 (\Omega)\,\vert\,Au\in J_n \mbox{ for all } n\in \mathbb{N}\} $, $ A:L^2(\Omega)  \rightarrow \ell ^2$ is a linear basis transformation operator and $ (J_n)_{n\in \mathbb{N}} = ([l_n,r_n])_{n\in \mathbb{N}}$ a given data set. Under some additional assumptions, a necessary and sufficient condition for $ u $ being a minimizer of \eqref{eq:reconst_prop} is stated in \cite[Theorem 5]{Holler12}. Using the full trace operator, this condition can now be extended as follows:
\begin{prop} With the assumptions of \cite[Theorem 5]{Holler12}, the function $ u\in L^2(\Omega) $ is a minimizer of \eqref{eq:reconst_prop} if and only if $u\in\BV(\Omega)\cap U$ and there exists $g\in H_{0}(\dive;\Omega)$
satisfying 
\begin{enumerate}
\item $\Vert g\Vert_{\infty}\leq1$,

\item $Tg = \sigma _u, \, \vDu \mbox{-almost everywhere}$,

\item $\left\{ \begin{array}{l}
(\dive g,a_{n})_{L^{2}}\geq0\mbox{\,\ if\,}(Au)_{n}=r_{n}\neq l_{n},\\
(\dive g,a_{n})_{L^{2}}\leq0\mbox{\,\ if\,}(Au)_{n}=l_{n}\neq r_{n},\\
(\dive g,a_{n})_{L^{2}}=0\mbox{\,\ if\,}(Au)_{n}\in\overset{\circ}{J}_{n},\end{array}\right.\quad\forall n\in \mathbb{N}.$
\end{enumerate}
\end{prop}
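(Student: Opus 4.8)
The plan is to recognize \eqref{eq:reconst_prop} as an instance of the general imaging problem \eqref{eq:imaging_prob} with data term $F = \mathcal{I}_U$, the convex indicator of the constraint set $U = \{u\in L^2(\Omega)\,\vert\,Au\in J_n \text{ for all } n\in\mathbb{N}\}$, and then to apply Proposition~\ref{prop:imaging_prob}. The only structural hypothesis of that proposition, the sum rule $\partial(\TV + \mathcal{I}_U) = \partial\TV + \partial\mathcal{I}_U$, is exactly the type of constraint qualification that I expect to be guaranteed by the standing assumptions of \cite[Theorem 5]{Holler12}; under those assumptions it may be invoked directly.

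With this in hand, Proposition~\ref{prop:imaging_prob} applied to $F = \mathcal{I}_U$ gives that $u$ solves \eqref{eq:reconst_prop} if and only if $u\in\BV(\Omega)$, there exists $g\in H_0(\dive;\Omega)$ with $\|g\|_\infty\leq 1$, $\dive g\in\partial\mathcal{I}_U(u)$, and $Tg = \sigma_u$ in $L^1(\Omega,\mathbb{R}^2;\vDu)$. The membership $u\in U$ is forced by $\mathcal{I}_U(u)$ being finite, and the conditions $\|g\|_\infty\leq 1$ and $Tg=\sigma_u$ are precisely items 1 and 2 of the statement. Hence the entire remaining task is to show that the abstract inclusion $\dive g\in\partial\mathcal{I}_U(u)$ is equivalent to the three coordinatewise sign conditions collected in item 3.

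For that equivalence I would identify $\partial\mathcal{I}_U(u)$ with the normal cone $N_U(u) = \{w\in L^2(\Omega)\,\vert\,(v-u,w)_{L^2}\leq 0 \text{ for all } v\in U\}$. Since $U$ is the preimage under the continuous linear operator $A$ of the box $\prod_n J_n$ with $J_n = [l_n,r_n]$, the normal cone factors coordinatewise: at the coordinate $n$, contact with the right endpoint $(Au)_n = r_n\neq l_n$ forces a nonnegative multiplier, contact with the left endpoint a nonpositive one, and an interior value a vanishing one. Transporting these conditions through the adjoint of the basis transformation $A$ yields that $w\in N_U(u)$ if and only if the pairings $(w,a_n)_{L^2}$ obey exactly the stated pattern; putting $w=\dive g$ gives item 3. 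This normal-cone computation is already performed in \cite[Theorem 5]{Holler12} for the normal-trace formulation, so I would cite it rather than reproduce it.

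I do not expect a genuine obstacle here. The only nontrivial ingredient, the subdifferential of $\mathcal{I}_U$, is supplied by the cited result, and the essential — indeed the sole new — point is that Theorem~\ref{thm:pointwise_characterization}, channelled through Proposition~\ref{prop:imaging_prob}, lets the normal-trace condition of \cite[Theorem 5]{Holler12} be upgraded to the pointwise full-trace condition $Tg=\sigma_u$. The proof thus reduces to this substitution together with an invocation of the known characterization of $N_U(u)$.
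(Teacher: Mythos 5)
Your proposal is correct and matches the paper's (implicit) argument: the paper states this proposition without a written proof, intending exactly the combination you describe, namely applying Proposition~\ref{prop:imaging_prob} with $F=\mathcal{I}_U$ (so that the full-trace condition $Tg=\sigma_u$ from Theorem~\ref{thm:pointwise_characterization} replaces the normal-trace condition) and importing the normal-cone computation $\dive g\in\partial\mathcal{I}_U(u)$ coordinatewise from \cite[Theorem 5]{Holler12}. Your explicit flagging of the sum rule $\partial(\TV+\mathcal{I}_U)=\partial\TV+\partial\mathcal{I}_U$ as the hypothesis to be supplied by the assumptions of \cite[Theorem 5]{Holler12} is appropriate and consistent with how the paper treats it.
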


\section{Conclusion}
We have introduced a trace operator allowing a pointwise evaluation of $ W^q (\dive ;\Omega) $ functions in the space $ L^1 (\Omega ,\mathbb{R}^d; \vDu) $, for $ u\in \BV(\Omega) $. Using this operator, we have derived a subdifferential characterization of the total variation functional when considered as a functional from $ L^p (\Omega) $ to the extended reals. This characterization gives an analytical motivation for the notation
\[ -\dive \left( \frac{\nabla u}{| \nabla u |} \right) \in \partial \TV (u), \] frequently used in mathematical imaging problems related to $ \TV $ minimization. We further have shown that, as on would expect, the concept of full trace extends the normal trace term by Anzellotti \cite{Anzellotti83} and that it can be used in several applications, for example, to characterize the total variational flow.

\begin{appendix}
\section{An approximation result}

Since existence of a suitable approximating sequence for $ W^q (\dive ;\Omega ) $-vector fields is frequently used in this work, we give here an example of how to construct such a sequence.
For $ \Omega $ a bounded Lipschitz domain, $ 1\leq q <\infty  $ and $ g\in W^q (\dive ;\Omega) $, we have to show existence of $ (g_n)_{n\geq 0 }\subset C^\infty (\overline{\Omega} ,\mathbb{R}^d) $ satisfying:

\begin{enumerate}
\item $\Vert g_n - g \Vert _{W^q(\dive)} \rightarrow 0 \quad \mathrm{ as } \quad n \rightarrow \infty $,
\item $\Vert g_n \Vert _\infty \leq \Vert g \Vert _\infty$ for each $ n\in \mathbb{N} $ if $ g\in L^\infty (\Omega,\mathbb{R}^d)\cap W^q(\dive ;\Omega ), $
\item $g_n (x) \rightarrow g(x) $ for every Lebesgue point $x\in \Omega $ of $g$,
\item $\Vert g_n - g \Vert _{\infty,\overline{\Omega}} \rightarrow 0$ as $ n\rightarrow \infty $, if, additionally, $ g\in C(\overline{\Omega},\mathbb{R}^d) $.
\end{enumerate}
\begin{proof}
The proof follows basic ideas presented in \cite[Theorem 4.2.3]{Evans} for a density proof for Sobolev functions. We make use of the Lipschitz property of $ \partial \Omega $: For $ x \in \partial \Omega$, take $ r>0 $ and $ \gamma : \mathbb{R}^{d-1} \rightarrow \mathbb{R} $ Lipschitz continuous, such that -- upon rotating and relabeling the coordinate axes if necessary -- we have
\begin{equation}\label{eq:lipschitz}
\Omega \cap Q_r (x) = \{ y \in \mathbb{R}^d \, \vert \, \gamma (y_1,\ldots,y_{d-1})<y_d \} \cap Q_r (x)
\end{equation}
where $ Q_r (x) = \{ y \in \mathbb{R}^d\, \vert \, \vert y_i - x_i \vert <r \, , \, i=1,..,d \} $. Now for fixed $ x\in \partial \Omega $, we define $ Q = Q_r (x) $ and $ Q' = Q_{\frac{r}{2}} (x) $.
In the first step, we suppose that \[ \mathrm{spt}(g) := \overline{\{y\in \Omega : g(y)\neq 0\}} \subset Q'\] and show that there exist vector fields $ g_\epsilon \in C^\infty (\overline{\Omega},\mathbb{R}^d) $ converging, as $ \epsilon \rightarrow 0 $, to $ g $ -- in $ W^q (\dive ;\Omega) $, pointwise in every Lebesgue-point $ y\in \Omega $ and uniformly on $ \overline{\Omega} $ if additionally $ g\in C(\overline{\Omega},\mathbb{R}^d) $ -- and satisfying the boundedness property 2). 

Choose  $ \alpha = \mathrm{Lip}(\gamma ) + 2 $ fixed and $ 0<\epsilon < \frac{r}{2(\alpha + 1)} $ arbitrarily. It follows then by straightforward estimations that, for any $ y\in \overline{\Omega \cap Q'} $, with $ y^\epsilon =y+\epsilon \alpha e_d$, where $ e_d $ is the $d$th coordinate vector according to \eqref{eq:lipschitz}, we have $ \overline{B_\epsilon (y^\epsilon )} \subset \Omega \cap Q $. 
Now with $ \eta : \mathbb{R}^d \rightarrow \mathbb{R} $ a standard mollifier kernel supported in the unit ball, we define
\[\eta_\epsilon (y ) = \frac{1}{\epsilon^d}\eta \left( \frac{y}{\epsilon} \right).\]
Using that $ \overline{B_\epsilon (y^\epsilon )} \subset \Omega \cap Q $, for $ y\in \overline{\Omega \cap Q'} $, it follows that  the support of the functions
\[ x \mapsto \eta_\epsilon (y + \epsilon \alpha e_d - x ) \]
is contained in $ \Omega \cap Q $. Thus, for $ 1\leq j \leq d $, the functions $ g_\epsilon ^j :\overline{\Omega \cap Q'} \rightarrow \R $,
\begin{eqnarray}
g_\epsilon ^j (y) & = &\intop _{\mathbb{R}^d} \eta_\epsilon (y + \epsilon \alpha e_d -x) g^j(x) \wrt x \\ \nonumber
& = & \intop_{\mathbb{R}^d} \eta _\epsilon (y- z) g^j (z + \epsilon \alpha e_d) \wrt z = \left( \eta_\epsilon * g^j_{S_\epsilon} \right)(y),
\end{eqnarray}where 
\[ g^j_{S_\epsilon} (y) := g^j (y + \epsilon \alpha e_d) \] denotes the composition of $ g^j $ with a translation operator, are well defined. Using standard results, given for example in \cite[Section 2.12 and Proposition 2.14]{Alt}, it follows that $ g_\epsilon ^j \in C^\infty (\overline{\Omega \cap Q'})$ and, extending by $ 0 $ outside of $ \overline{\Omega \cap Q'}  $, that
\begin{eqnarray*}
   \Vert g_\epsilon ^j - g^j \Vert _{L^q(\Omega \cap Q')} & \leq & \Vert \eta_\epsilon * g_{S_\epsilon}^j    - \eta_\epsilon * g^j \Vert_{L^q(\mathbb{R}^d)} + \Vert \eta_\epsilon * g^j - g^j \Vert _{L^q(\mathbb{R}^d)} \\
    & \leq & \Vert \eta_\epsilon \Vert _{L^1(\mathbb{R}^d)} \Vert g_{S_\epsilon}^j    - g^j \Vert_{L^q(\mathbb{R}^d)}+ \Vert \eta_\epsilon * g^j - g^j \Vert _{L^q(\mathbb{R}^d)}\rightarrow 0
\end{eqnarray*}
as $\epsilon \rightarrow 0 $. By equivalence of norms in $ \mathbb{R}^d $ it thus follows that the vector valued functions $ g_\epsilon = (g_\epsilon ^1, \ldots, g_\epsilon ^d) $ are contained in $ C^\infty (\overline{\Omega \cap Q'}) $ and that $ \Vert g_\epsilon - g \Vert _{L^q(\Omega \cap Q')} \rightarrow 0 $ as $\epsilon \rightarrow 0  $.
Since, for $ i\in \{1 \ldots d\} $, 
\[\partial _i (\eta_\epsilon * g^j_{S_\epsilon} )= \partial _i \eta _\epsilon * g^j _{S_\epsilon}, \] we have, for $ y\in \overline{\Omega \cap Q'} $, that
\begin{eqnarray*}
\dive g_\epsilon (y) & = &  \intop _{\mathbb{R}^d}  \nabla _y (\eta_\epsilon (y -x)) \cdot g_{S_\epsilon}(x) \wrt x  \\
 & = &  \intop _{\Omega \cap Q}  \nabla _y (\eta_\epsilon (y + \epsilon \alpha e_d -z)) \cdot g(z) \wrt z  \\
 & = & - \intop _{\Omega \cap Q}  \nabla _z (\eta_\epsilon (y + \epsilon \alpha e_d -z)) \cdot g(z) \wrt z  \\
 & = &   \intop _{\Omega \cap Q}  (\eta_\epsilon (y +  \epsilon \alpha e_d -z)) \dive g(z) \wrt z  \\
  & = &   \intop _{\mathbb{R}^d}  (\eta_\epsilon (y +  \epsilon \alpha e_d -z)) \dive g(z) \wrt z,  \\
\end{eqnarray*} where we used that $  x\mapsto \eta _\epsilon (y + \epsilon \alpha e_d - x)\in C_c ^\infty (\Omega \cap Q) $ and the weak definition of $ \dive $. An argumentation analogous to the above thus yields $\Vert \dive g_\epsilon - \dive g \Vert _{L^q (\Omega \cap Q')} \rightarrow 0  \mbox{ as } \epsilon \rightarrow 0 $.
Now let $ y\in \Omega \cap Q' $ be a Lebesgue point of $ g $. Again by equivalence of norms it suffices to show that $ g_\epsilon ^j (y) \rightarrow g^j (y) $ for $ y $ being a Lebesgue point of $ g^j $, $ 1\leq j \leq d $. With $ \epsilon>0 $ sufficiently small such that, with $ t:=1+\alpha $, we have $ B_{\epsilon t} (y) \subset \Omega \cap Q $ we can estimate
\begin{eqnarray*}
\vert g^j _\epsilon (y) -g^j (y) \vert & = &  \bigg \vert \frac{1}{\epsilon ^d} \intop _{\mathbb{R}^d}  \eta \Bigr(\frac{y-w}{\epsilon}\Bigl)  \left( g^j(w+\epsilon \alpha e_n) - g^j (y) \right) \wrt w \bigg \vert \\
 & \leq  & C(d) \frac{1}{\vert B_\epsilon (y) \vert } \intop _{B_\epsilon (y)}  \vert g^j(w+\epsilon \alpha e_n)-g^j(y) \vert \wrt w \\
 & = & C(d)\frac{1}{\vert B_\epsilon (y)\vert} \intop _{B_\epsilon (y+\epsilon \alpha e_n)}  \vert g^j(w)-g^j(y) \vert \wrt w \\ & \leq & \tilde{C}(d) \frac{1}{\vert B_{\epsilon t}(y) \vert} \intop _{B_{\epsilon t} (y)}  \vert g^j(w)-g^j(y) \vert \wrt w,
\end{eqnarray*}
with $ C(d),\tilde{C}(d)>0 $ constants depending only on $ d $. Now since $ y $ was assumed to be a Lebesgue point of $ g^j $, the desired convergence follows.

Now, additionally suppose that $ g\in C(\overline{\Omega},\mathbb{R}^d) $. Note that $ \epsilon >0  $ can also be chosen such that with $ \tau = \alpha + 1 $, $ B_{\epsilon t} \subset \Omega \cap Q $ for all $ y\in \overline{\Omega \cap Q'} $, so the above implies
\begin{eqnarray*} \vert g^j _\epsilon (y) -g^j (y) \vert &\leq& \tilde{C}(d) \frac{1}{\vert B_{\epsilon t}(y) \vert} \intop _{B_{\epsilon t} (y)\cap \Omega \cap Q}  \vert g^j(w)-g^j(y) \vert \wrt w \\
 &\leq &\tilde{C}(d) \underset{w\in \overline{B_{\epsilon t} (y)\cap \Omega \cap Q}}{\sup}\left(| g^j(w) - g^j(y) |\right).
 \end{eqnarray*}
By uniform continuity of $ g $ in the compact set $ \overline{\Omega} $ it follows that $ \Vert g^j _\epsilon - g^j \Vert _{\infty,\overline{\Omega \cap Q}}$ -- and thus also $ \Vert g_\epsilon - g \Vert _{\infty,\overline{\Omega \cap Q'}}  $ -- converges to zero as $ \epsilon \rightarrow 0  $.

Next we estimate the sup-norm of $ g_\epsilon $: Suppose $ \Vert g \Vert _\infty \leq C$. For $ y\in \overline{\Omega \cap Q'} $ we then have:
\begin{eqnarray*}
\vert g_\epsilon (y) \vert^2  & = & \frac{1}{\epsilon ^{2d}} \sum _{i=1} ^d \left( \, \intop  _{\Omega\cap Q}  \sqrt{\eta \Bigl(\frac{y-w}{\epsilon} + \alpha e_n \Bigr)}\sqrt{\eta \Bigl(\frac{y-w}{\epsilon} + \alpha e_n \Bigr)}g^i (w) \wrt w \right) ^2 \\ 
 & \leq & \frac{1}{\epsilon ^{2d}}  \left(\, \intop  _{\Omega\cap Q} \eta \Bigl(\frac{y-w}{\epsilon} + \alpha e_n \Bigr)  \sum _{i=1} ^d g^i (w)^2 \wrt w \right) \cdot \\
 & & \left( \,\intop _{\Omega\cap Q} \eta \Bigl(\frac{y-w}{\epsilon} + \alpha e_n \Bigr) \wrt w \right) \\
 & \leq & C^2.
\end{eqnarray*}
At last, since $ \mathrm{spt}(g) \subset Q'  $ it follows that $ \mathrm{spt}(g_\epsilon) \subset Q'  $  for sufficiently small $ \epsilon  $ and thus we can extend it by $ 0 $ to the rest of $ \overline{\Omega} $. Note that the convergence of $ g_\epsilon  $ to $ g $ -- in $ W^q (\Omega,\dive) $, in every Lebesgue point $ y\in \Omega \setminus Q' $  and uniformly on $ \overline{\Omega} $ in the case that additionally $ g\in C(\overline{\Omega},\mathbb{R}^d) $ -- and also the uniform boundedness on all of $ \overline{\Omega} $ are trivially satisfied.

In the second step we make use of the previous calculations to get an approximation to $ g $ without additional assumptions: Since $ \partial \Omega $ is compact, there exist finitely many cubes $ Q'_i= Q_{\frac{r_i}{2}} (x_i),\,1\leq i \leq M $ as above, which cover $ \partial \Omega  $. Let $ (\zeta _i)_{0\leq i \leq M} $ be $ C^\infty $-functions, such that
\begin{equation*}
\left\{ 
\begin{gathered}
\shoveleft{0\leq \zeta _i \leq 1 \quad \mathrm{spt}(\zeta _i )\subset Q_i ' \quad \mathrm{for } \, 1\leq i\leq M,} \\
\shoveleft{ 0\leq \zeta _0 \leq 1 \quad \mathrm{spt}(\zeta _0 )\subset \Omega ,} \\
\shoveleft{ \sum _{i=0} ^M \zeta _i \equiv 1 \quad \mbox{on }  \Omega .}
\end{gathered}
\right.
\end{equation*}
As shown above, for $ g\zeta _i $, $ 1\leq i \leq M $ we can construct vector fields $ g_{\epsilon,i} \in C^\infty (\overline{\Omega},\mathbb{R}^d)$ converging to $ g\zeta _i $ in the desired sense. By a standard mollifier approximation we can also construct $ g_{\epsilon,0} $ converging to $ g\zeta _0 $ in the desired sense. Setting 
\begin{equation*}
g_\epsilon = \sum _{i=0}^M g_{\epsilon,i}
\end{equation*}
we finally obtain vector fields in $ C^\infty (\overline{\Omega},\mathbb{R}^d) $ converging to $ g $ in $ W^q (\dive ;\Omega) $ as $ \epsilon \rightarrow 0 $ and, as one can check easily, satisfying also the additional boundedness and  convergence properties 2), 3), 4).
\end{proof}
\end{appendix}

\bibliography{lit_dat}
\bibliographystyle{plain}

\end{document}